\documentclass[numbers,webpdf,arxiv]{ima-authoring-template-arxiv}
\usepackage{amsthm}
\usepackage{amsmath, amsfonts}
\usepackage{booktabs}

\usepackage{algorithm}
\usepackage{algpseudocode}
\usepackage{bm}
\usepackage{graphicx}

\graphicspath{{Fig/}}

\newtheoremstyle{mystyle}
  {6pt}   
  {6pt}   
  {\itshape} 
  {}       
  {\bfseries} 
  {.}      
  { }      
  {}       

\theoremstyle{mystyle}%
\newtheorem{theorem}{Theorem}
\newtheorem{lemma}[theorem]{Lemma}
\newtheorem{corollary}{Corollary}[theorem]

\newtheorem{remark}{Remark}%

\theoremstyle{remark}

\numberwithin{equation}{section}

\begin{document}

\DOI{DOI HERE}
\copyrightyear{2021}
\vol{00}
\pubyear{2021}
\access{Advance Access Publication Date: Day Month Year}
\appnotes{Paper}
\copyrightstatement{Published by Oxford University Press on behalf of the Institute of Mathematics and its Applications. All rights reserved.}
\firstpage{1}


\title[Physics-Informed Neural Networks for SH-Wave Dispersion]{A Hybrid Physics-Informed Neural Network Framework for Computing Dispersion Relations of SH Waves in Generalized Hetrogeneous Layered Media with Applications}

\author{Subhajyoti Sarkar*
\address{\orgdiv{Department of Mathematics and Computing}, \orgname{Indian Institute of Technology (Indian School of Mines) Dhanbad}, \orgaddress{\street{Dhanbad}, \postcode{826004}, \state{Jharkhand}, \country{India}}}}
\author{Santimoy Kundu 
\address{\orgdiv{Department of Mathematics and Computing}, \orgname{Indian Institute of Technology (Indian School of Mines) Dhanbad}, \orgaddress{\street{Dhanbad}, \postcode{826004}, \state{Jharkhand}, \country{India}}}}

\authormark{SUBHAJYOTI SARKAR AND SANTIMOY KUNDU}

\corresp[*]{Corresponding author: \href{sarkarsubhajyoti780@gmail.com}{sarkarsubhajyoti780@gmail.com}}

\received{Date}{0}{Year}
\revised{Date}{0}{Year}
\accepted{Date}{0}{Year}


\abstract{ This work presents a mathematical and computational framework for computing the dispersion relations of shear horizontal (SH) waves in continuously varying heterogeneous layered structures. The approach isolates the contribution of the heterogeneous layer from the complete dispersion relation, learns this contribution using a physics-informed neural network (PINN) and subsequently incorporates the trained model to determine the complete dispersion relation. The mathematical properties of the discretized problem are investigated, including the singularity of the finite-difference system and the oscillatory behavior of the layer solution, while a generalization-error estimate is established for the PINN approximation. The framework is first tested on a seismological configuration consisting of a heterogeneous sandstone layer over a granite half-space, where exponential heterogeneity is considered with independent variation rates in shear modulus and density. The proposed approach is validated against analytical solutions in special cases, while for general configurations the Haskell matrix method demonstrates convergence toward the continuously varying dispersion relation predicted by the PINN as the number of homogeneous sublayers increases. Parametric studies further confirm consistency with the underlying physics. An important feature of the method is that the heterogeneous-layer equation can be trained independently and reused in multiple settings. To demonstrate this, the same trained network is coupled with piezoelectric and piezomagnetic substrates governed by fundamentally different physical laws, highlighting the potential of the PINN framework as a reusable computational module for dispersion analysis in heterogeneous layered media. }

\keywords{Physics Informed Neural Networks; Shear Horizontal Waves; Dispersion Relation; Seismic Waves; Generalization Errors; Piezoelectric Materials; Piezomagnetic Materials.}


\maketitle

\section{Introduction}
Dispersion relations play a crucial role in understanding the nature of wave propagation in layered structures, as they directly contain information about the frequency-dependent phase velocity of propagating waves. In particular, shear horizontal (SH) waves are of fundamental importance in a wide range of applications. In seismology, SH waves are commonly used for subsurface characterization \cite{yin2020joint,li2019wave}, where dispersion curves obtained from field data are matched with theoretically predicted dispersion curves to infer the subsurface properties responsible for the observed dispersive behavior. SH waves are also widely used in surface acoustic wave (SAW) devices, where their dispersion characteristics govern key device properties such as phase velocity, frequency response, and mode confinement. In such systems, careful engineering of layered piezoelectric and related material structures allows control over dispersion behavior, enabling the design of filters, resonators, and sensors with tailored frequency selectivity and high sensitivity \cite{mandal2022saw}.

Owing to its widespread applicability, accurate modeling of dispersion relations in realistic heterogeneous media is of considerable importance. In practical settings, the material properties of a medium may vary continuously with depth, and the assumption of homogeneous layers may not adequately represent the underlying structure. A variety of depth-dependent heterogeneity models have therefore been considered in the literature. Exponential variations of material properties have been widely employed in the analysis of SH and Love waves \cite{dhua2016wave,singh2025shear,deliktas2023nonlinear}. In particular, Deliktas-Ozdemir considered a layered medium with a vertically heterogeneous half-space and examined exponential variations in the linear and nonlinear material properties and their influence on Love-wave propagation and nonlinear wave evolution \cite{deliktas2023comparison}. Polynomial representations, including linear and quadratic variations, have also been used to describe continuously varying material properties \cite{rajak2022study,gupta2013propagation}. Other functional forms have likewise been considered, including hyperbolic-type profiles such as $\cosh$-based models \cite{dutta1963love}. These studies demonstrate the wide range of functional forms that have been employed to represent depth-dependent heterogeneity and, consequently, the influence of the assumed material profile on the resulting wave characteristics.

For specific choices of the material profiles, the governing variable-coefficient differential equations may admit analytical or semi-analytical treatments. Exponential and polynomial profiles, for example, can lead to formulations that are amenable to analytical reduction or special-function representations. When additional physical effects are incorporated, the resulting mathematical structure can become more involved. For instance, in formulations involving nonlocal effects, Bessel-function representations have been employed in the analysis of Love-wave propagation with polynomial-type heterogeneity \cite{pramanik2024love}. Such analytical formulations are attractive because, once the functional form of the heterogeneity has been prescribed, they can provide an efficient means of evaluating the corresponding dispersion relation. However, the analytical treatment is generally associated with particular functional forms for which the governing equation can be reduced to a suitable canonical or special-function problem. Consequently, changing the prescribed heterogeneity may require a different analytical formulation.

To accommodate more general depth-dependent profiles, numerical approaches have also been considered. Kie\l{}czy'nski et al. (\cite{kielczynski2016propagation}) formulated Love-wave propagation in functionally graded media as a direct Sturm-Liouville problem and solved it using the finite-difference method. Their formulation allows several prescribed profiles to be treated within the same numerical framework. In the finite-difference approach, the continuous differential problem is converted into a generalized matrix eigenvalue problem, whereas in the transfer-matrix approach the heterogeneous medium is represented by a sequence of homogeneous sublayers. Polynomial expansion techniques provide another numerical alternative. Bou Matar et al. (\cite{bou2013legendre}) employed Legendre polynomial expansions in finite layers together with Laguerre polynomial expansions in a semi-infinite substrate, leading to a matrix eigenvalue formulation from which complex wavenumbers can be obtained directly for a prescribed frequency. These developments illustrate that numerical formulations can extend the treatment of heterogeneous wave propagation beyond cases for which closed-form analytical solutions are available.

Several approximate and asymptotic approaches have also been employed for variable-coefficient wave equations. Liu et al. (\cite{liu2005propagation}) used a WKB-type approximation under assumptions appropriate to the corresponding asymptotic regime . Nie et al. (\cite{nie2015effect}) considered a Frobenius-series approach, in which the solution is constructed through a series expansion about a suitable point . Similarly, Shuvalov et al. (\cite{shuvalov2008state}) employed a Peano-series representation for wave propagation in heterogeneous media . These approaches provide useful analytical and asymptotic insight into wave propagation in continuously varying media and extend the range of problems that can be treated beyond a limited set of exactly solvable profiles. At the same time, their solution procedures involve problem-dependent approximations or series representations whose coefficients and evaluation depend on the governing parameters and the assumed material variation.

The choice of computational formulation becomes particularly important when dispersion relations are required repeatedly over a range of frequencies, wavenumbers, or material parameters, as occurs in parameter estimation and inversion. In such applications, the forward dispersion problem must be evaluated for a large number of candidate material models. A methodology that requires a new analytical reduction, asymptotic expansion, discretized eigenvalue problem, or other problem-specific construction for each material configuration may therefore become computationally demanding when many forward evaluations are required. Moreover, the depth dependence of material properties encountered in practical applications need not conform to a prescribed exponential, polynomial, or other canonical form. Such properties may instead be obtained from observations, numerical models, or interpolated measurements and may exhibit non-monotonic or otherwise irregular variations with depth. This motivates the development of a framework that can accommodate a broad class of depth-dependent material profiles while retaining the computational efficiency desirable for repeated dispersion calculations.

This motivates us to explore an alternative approach for obtaining dispersion relations in heterogeneous media in which the shear modulus $\mu$ satisfies
$\mu \in C^{1}([0,H]),$
while the density $\rho$ is assumed to satisfy
$\rho \in C([0,H]),$
where $H$ denotes the thickness of the heterogeneous layer. The objective is to develop a framework that is not restricted to a particular functional form of heterogeneity, but can accommodate a general class of admissible material profiles satisfying these regularity assumptions. In particular, the same framework should be capable of treating different forms of depth dependence without requiring a separate analytical reduction for each prescribed profile.

Recent developments in physics-informed neural networks (PINNs) \cite{raissi2019physics} provide a promising approach in this direction. By incorporating the governing differential equations together with the associated boundary and interface conditions directly into the learning process, PINNs provide a flexible framework for approximating solutions of differential equations with variable coefficients. This makes them a natural candidate for representing the dependence of the wave field, and consequently the dispersion relation, on both the material profiles and the wave-propagation parameters. Our main objective is therefore to investigate the use of PINNs for computing dispersion relations over a general class of heterogeneous media satisfying $\mu\in C^{1}([0,H])$ and $\rho\in C([0,H])$, and to assess their potential as an efficient framework for repeated dispersion calculations across varying material and wave parameters.

The remainder of this paper is organized as follows. Section \ref{sec2} formulates the SH-wave propagation problem in a heterogeneous layer overlying a homogeneous half-space and derives the corresponding dispersion relation. Section \ref{sec3} presents the proposed hybrid physics-informed neural network framework, including the separation of the heterogeneous-layer problem from the substrate contribution, the numerical formulation, the analysis of singularities of the discretized system, and the oscillatory behavior of the layer solution. The PINN formulation, composite loss function, and generalization-error estimate are subsequently developed, followed by the procedure for computing the dispersion relation from the trained network. Section \ref{sec4} presents numerical experiments and applications of the proposed framework. The first application considers SH-wave propagation in a heterogeneous Earth-material configuration, where the effects of the heterogeneity parameters are investigated and the results are validated against analytical solutions and the Haskell matrix method. The second application demonstrates the reusability of the trained heterogeneous-layer model by coupling it with piezoelectric and piezomagnetic substrates governed by different physical laws. Finally, Section \ref{sec5} summarizes the main findings and discusses the implications and possible extensions of the proposed framework.

\section{Problem Formulation} \label{sec2}

We consider the three-dimensional particle displacement vector as
\[
\mathbf{D}_i(x,y,z,t)=\left(u_i(x,y,z,t),\,v_i(x,y,z,t),\,w_i(x,y,z,t)\right),
\]
where \(i=1\) corresponds to the layer and \(i=2\) corresponds to the half-space, respectively. For the remaining manuscript, we shall stick to this subscript notation. The coordinate axes are placed as shown in Fig.~\ref{fig:1}, with \(x\) and \(y\) being horizontal to the surface and positive \(z\) directed vertically downward.

The stress-strain relation is further given as
\[
\boldsymbol{\tau}_i=\lambda_i(z)\left(\nabla\cdot\mathbf{D}_i\right)\mathbf{I}
+2\mu_i(z)\boldsymbol{\varepsilon}_i.
\]

where \(\lambda_i(z)\) and \(\mu_i(z)\) are positive valued depth-dependent Lam\'e parameters, \(\boldsymbol{\tau}_i\) is the stress tensor and the corresponding infinitesimal strain tensor is given by
\[
\boldsymbol{\varepsilon}_i
=
\frac{1}{2}
\left[
\nabla \mathbf{D}_i+
(\nabla \mathbf{D}_i)^T
\right].
\]

Considering Love waves, which are horizontally polarized shear waves, we assume propagation along the $x$-direction, with particle motion polarized along the $y$-direction, as illustrated in Fig.~\ref{fig:1}. Accordingly, only the $y$-component of the displacement field is non-zero and can be expressed as
\[
v_i=v_i(x,z,t),
\]
while all field quantities are independent of the $y$-coordinate, such that
\[
\frac{\partial}{\partial y}=0.
\]
\begin{figure}[htbp]
    \centering
    \begin{minipage}{0.6\textwidth}
        \centering
        \includegraphics[width=\textwidth]{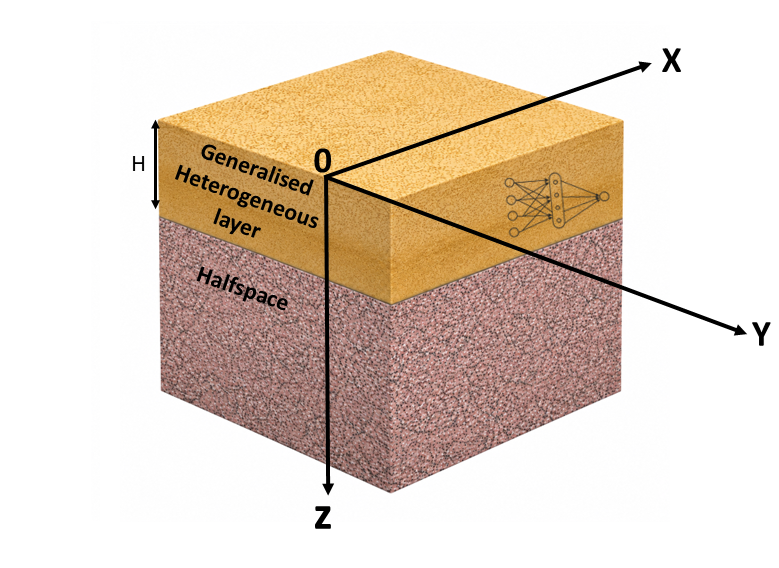}
    \end{minipage}

    \vspace{0.3cm}
    \caption{Basic geometry of the proposed problem.}
    \label{fig:1}
\end{figure}

The corresponding shear stress-strain relations therefore reduce to
\[
\tau_i^{yx}=\mu_i(z)\frac{\partial v_i}{\partial x},
\qquad
\tau_i^{yz}=\mu_i(z)\frac{\partial v_i}{\partial z}.
\]

The general equation of motion is given by
\[
\rho_i(z)\frac{\partial^2\mathbf{D}_i}{\partial t^2}
=
\nabla\cdot\boldsymbol{\tau}_i,
\]
where \(\rho_i(z)>0\) is the depth-dependent density. 

For the special case of Love waves, only the transverse displacement component survives and the governing equation of motion reduces to
\[
\rho_i(z)\frac{\partial^2 v_i}{\partial t^2}
=
\frac{\partial \tau_i^{yx}}{\partial x}
+
\frac{\partial \tau_i^{yz}}{\partial z}.
\]

Using the stress-strain relations, we obtain the equation of motion in terms of the displacement component \(v_i\) as follows:
\[
\rho_i(z)\frac{\partial^2 v_i}{\partial t^2}
=
\frac{\partial}{\partial x}
\left[
\mu_i(z)\frac{\partial v_i}{\partial x}
\right]
+
\frac{\partial}{\partial z}
\left[
\mu_i(z)\frac{\partial v_i}{\partial z}
\right].
\]

Now, we consider the plane wave solution
\[
v_i(x,z,t)=V_i(z)e^{i(kx-\omega t)}.
\]

Therefore, the equation of motion reduces to
\[
-\rho_i(z)\omega^2V_i
=
-\mu_i(z)k^2V_i
+\mu_i'(z)\frac{dV_i}{dz}
+\mu_i(z)\frac{d^2V_i}{dz^2}.
\]

Hence,
\[
\frac{d^2V_i}{dz^2}
+
\frac{\mu_i'(z)}{\mu_i(z)}
\frac{dV_i}{dz}
+
\left(
\frac{\rho_i(z)}{\mu_i(z)}\omega^2-k^2
\right)V_i=0.
\]

For this problem, we consider
\[
\mu_2(z)=\mu_2,
\qquad
\rho_2(z)=\rho_2.
\]

Therefore, the final displacement equations become
\[
\frac{d^2V_1}{dz^2}
+
f(z)
\frac{dV_1}{dz}
+
g(z,k,\omega)V_1=0,
\]

and
\[
\frac{d^2V_2}{dz^2}
-
\alpha_2^2(k,\omega)V_2=0.
\]

where
\[
f(z)=\frac{\mu_1'(z)}{\mu_1(z)},
\qquad
g(z,k,\omega)=
\left(
\frac{\rho_1(z)}{\mu_1(z)}\omega^2-k^2
\right)
\]

and
\[
\alpha_2^2
=
\left(
k^2-\frac{\rho_2\omega^2}{\mu_2}
\right).
\]
The boundary conditions associated with the problem are as follows.

At the free surface \(z=0\), the traction must vanish. Therefore,
\[
\tau_1^{yz}=0
\quad \text{at } z=0.
\]

Using the stress-strain relation, we obtain
\[
\mu_1(0)\frac{dV_1}{dz}=0,
\]

which further reduces to
\begin{equation}
\frac{dV_1}{dz}=0
\quad \text{at } z=0.
\label{stress_free}\end{equation}

Further, the displacement must satisfy the radiation condition in the half-space, i.e.,
\begin{equation}
V_2(z)\to0
\quad \text{as } z\to\infty.
\label{radiation_condition}\end{equation}

Since the wave propagates through both the layer and the underlying half-space, appropriate continuity conditions must be satisfied at their interface, $z=H$. These interface conditions are given by:

\begin{enumerate}
\item Displacement continuity
\begin{equation}
V_1(H)=V_2(H),
\label{disp_continuity}\end{equation}

\item Stress continuity
\[
\tau_1^{yz}=\tau_2^{yz}
\quad \text{at } z=H,
\]

which gives
\begin{equation}
\mu_1(H)\frac{dV_1}{dz}\Bigg|_{z=H}
=
\mu_2\frac{dV_2}{dz}\Bigg|_{z=H}.
\label{stress_continuity}\end{equation}
\end{enumerate}

Thus, the existence of Love waves at a given frequency \(\omega\) implies that there exists a corresponding wavenumber \(k(\omega)\)  such that non-trivial solutions

\[
V_1(z)\in C^2[0,H],
\qquad
V_2(z)\in C^2[H,\infty)
\]

exist satisfying

\begin{equation}
\frac{d^2V_1}{dz^2}
+f(z)\frac{dV_1}{dz}
+g(z,k,\omega)V_1=0,
\qquad z\in[0,H],
\label{eq1}\end{equation}

\begin{equation}
\frac{d^2V_2}{dz^2}
-\alpha_2^2V_2=0,
\qquad z\in[H,\infty),
\label{eq2}\end{equation}

subjected to boundary conditions

\begin{equation}
\frac{dV_1}{dz}=0
\quad \text{at } z=0
\text{ and }
V_2(z)\to0
\quad \text{as } z\to\infty,
\label{bc1}\end{equation}

and interface conditions

\begin{equation}
V_1(H)=V_2(H),
\qquad
\mu_1(H)\frac{dV_1}{dz}
=
\mu_2\frac{dV_2}{dz}
\quad \text{at } z=H.
\label{bc2}\end{equation}
The set of admissible pairs \((\omega,k(\omega))\) for which Love waves exist is known as the dispersion relation. Finding the dispersion relation for generalized layered heterogeneity, i.e.,
$
\mu_1(z)\in C^1[0,H]$ and $\rho_1(z)\in C[0,H]$ is the main objective of this paper. The current formulation is developed for an elastic heterogeneous layer over a homogeneous half-space for mathematical clarity. Nevertheless, the governing framework developed here can naturally be extended to other configurations involving complex constitutive behavior and heterogeneity profiles.

\section{Proposed Framework} \label{sec3}
Our first target is to divide the boundary value problem given by Eq.~(\ref{eq1}) and Eq.~(\ref{eq2}), subjected to the boundary condition Eq.~(\ref{bc1}) and the interface condition Eq.~(\ref{bc2}), into two independent boundary value problems. We achieve this by taking
\[
V_1(H)=V_2(H)=1.
\]

With this normalization, we have separated the original boundary value problem into the following two independent problems.

\subsubsection*{ODE 1:}

\begin{equation*}
\frac{d^2V_1}{dz^2}
+f(z)\frac{dV_1}{dz}
+g(z,k,\omega)V_1=0,
\end{equation*}

subject to
\begin{equation}
\frac{dV_1}{dz}\Bigg|_{z=0}=0,
\qquad
V_1(H)=1.
\label{ode1}\end{equation}

\subsubsection*{ODE 2:}

\[
\frac{d^2V_2}{dz^2}
-\alpha_2^2(k,\omega)V_2=0,
\]

subject to
\begin{equation}
V_2(H)=1,
\qquad
V_2(z)\to0
\quad \text{as } z\to\infty.
\label{ode2}\end{equation}

Let the solution of Eq.~(\ref{ode1}) be denoted by \(V_1(z)\) and the solution of Eq.~(\ref{ode2}) by \(V_2(z)\).

For physically admissible solutions, the stress continuity condition at the interface must be satisfied, i.e.,
\begin{equation}
\mu_1(H)\frac{dV_1}{dz}\Bigg|_{z=H}
=
\mu_2\frac{dV_2}{dz}\Bigg|_{z=H}.\label{dispersion}
\end{equation}

This condition yields the required dispersion relation. We now present a few special cases where analytical solution of $\frac{dV_1}{dz}$ and $\frac{dV_2}{dz}$ are present at $z=H$ to demonstrate the applicability of the proposed framework.

\subsection{Validation Examples}
\subsubsection{Example 1}
Consider a layer with exponential heterogeneity of the same rate in both shear modulus and density, i.e.,
\[
\mu(z)=\mu_0e^{az},
\qquad
\rho(z)=\rho_0e^{az}.
\]

Where $\mu_0$ and $\rho_0$ are the corresponding  values of the shear modulus and density at the surface. Thus, according to Eq.~(\ref{eq1}),
\[
f(z)=a,
\qquad
g(z)=\left(\frac{\rho_0}{\mu_0}\omega^2-k^2\right).
\]

Hence, the first governing equation becomes
\[
\frac{d^2V_1}{dz^2}
+a\frac{dV_1}{dz}
+\left(
\frac{\rho_0}{\mu_0}\omega^2-k^2
\right)V_1=0,
\]

subject to
\[
\frac{dV_1}{dz}\Bigg|_{z=0}=0,
\qquad
V_1(H)=1.
\]
The general solution is therefore,
\[
V_1(z)
=
Ae^{-\frac{az}{2}}
\cos\left(
z\sqrt{d_1^2-\frac{a^2}{4}}\,
\right)
+
Be^{-\frac{az}{2}}
\sin\left(
z\sqrt{d_1^2-\frac{a^2}{4}}\,
\right).
\]

Using the boundary conditions \eqref{ode1}, we obtain the the corresponding particular solution for \(V_1(z)\) as follows:
\begin{equation}
V_1(z)=
\frac{
e^{-a(z-H)/2}
\left[
\cos\!\left(z\sqrt{d_1^2-\frac{a^2}{4}}\right)
+
\frac{a}{2\sqrt{d_1^2-\frac{a^2}{4}}}
\sin\!\left(z\sqrt{d_1^2-\frac{a^2}{4}}\right)
\right]
}
{
\cos\!\left(H\sqrt{d_1^2-\frac{a^2}{4}}\right)
+
\frac{a}{2\sqrt{d_1^2-\frac{a^2}{4}}}
\sin\!\left(H\sqrt{d_1^2-\frac{a^2}{4}}\right)
}.
\label{first}\end{equation}

For the homogeneous isotropic half-space, the governing equation becomes
\[
\frac{d^2V_2}{dz^2}
-\alpha_2^2(k,\omega)V_2=0.
\]

Therefore,
\[
V_2(z)
=
Ce^{\alpha_2 z}
+
De^{-\alpha_2 z}.
\]

 Due to the radiation boundary condition \eqref{ode2} we have the following admissible solution :  
\begin{equation}
V_2(z)
=
e^{\alpha_2 (H-z)}.
\label{second}\end{equation}

Using the stress equality condition \eqref{dispersion} we get the dispersion relation as follows:
\begin{equation}
\mu_1(H)
\frac{
d_1^{\,2}
\sin\!\left(H\sqrt{d_1^2-\frac{a^2}{4}}\right)
}
{
\sqrt{d_1^2-\frac{a^2}{4}}
\cos\!\left(H\sqrt{d_1^2-\frac{a^2}{4}}\right)
+
\frac{a}{2}
\sin\!\left(H\sqrt{d_1^2-\frac{a^2}{4}}\right)
}
=
\mu_2\alpha_2.\label{case1}
\end{equation}
This dispersion relation coincides with the dispersion relation in \cite{sadab2023analytical} under suitable asumptions. 

\subsubsection{Example 2}
Further, setting $a=0$ in \eqref{case1} reduces the dispersion relation to the classical Love wave \cite{stein2003introduction} case of a homogeneous layer over a homogeneous half-space. 

\begin{equation}
\tan\!\left(
H\sqrt{\frac{\rho_0}{\mu_0}\omega^2-k^2}
\right)
=
\frac{\mu_2}{\mu_0}
\frac{
\sqrt{k^2-\frac{\rho_2\omega^2}{\mu_2}}
}{
\sqrt{\frac{\rho_0}{\mu_0}\omega^2-k^2}
}.
\label{Love}\end{equation}

Since the dispersion relation represents a relationship between the spatial frequency (k) and the temporal frequency (\(\omega\)), it is invariant under a multiplicative scaling of the wave amplitudes. Therefore, the normalization
$
V_1(H)=V_2(H)=1
$
may be imposed without any loss of generality. This normalization does not affect the resulting dispersion relation, as only the relative amplitudes and continuity conditions enter the derivation.

From a physical viewpoint, this choice corresponds to scaling the displacement field such that the wave amplitude at the interface (z=H) is unity. Such a normalization is particularly convenient when considering waves generated by a distant source, where the absolute amplitude is not of primary interest and only the phase velocity and dispersion characteristics are sought. Consequently, the interface displacement is taken as the reference amplitude, allowing the analysis to focus solely on the propagation properties of the Love waves. 

We can therefore see that, once a particular solution \(V_1(z)\) of \eqref{ode1} has been obtained, it can be used to evaluate the dispersion relation \eqref{dispersion}. For the homogeneous half-space \eqref{ode2} reduces to

\[
\frac{d^2V_2}{dz^2}(H)-\alpha_2V_2=0,
\]
for decaying solution in the halfspace we further require $\frac{\omega^2}{k^2}<\frac{\mu_2}{\rho_2}$. Further the interfacial traction condition becomes:

\[
\mu_1(H)\frac{dV_1}{dz}(H)
=-\mu_2\alpha_2.
\]

Hence, the dispersion relation can be written as

\begin{equation*}
\mu_1(H)\frac{dV_1}{dz}(H)
+\mu_2\alpha_2=0 . 
\end{equation*}

Therefore, for a prescribed frequency \(\omega\), the admissible wavenumbers (k) are obtained by solving

\begin{equation}
\mu_1(H)\frac{dV_1}{dz}(H)
+\mu_2
\sqrt{k^2-\frac{\rho_2\omega^2}{\mu_2}}
=0 \text{ subjected to } \frac{\omega^2}{k^2}<\frac{\mu_2}{\rho_2} .
\label{disp}\end{equation}

The quantity \(V_1(z)\) is determined from \eqref{ode1} subject to the boundary conditions \(V_1'(0)=0\) and \(V_1(H)=1\). Further for the lower bound of $\frac{\omega^2}{k^2}$ we have the following theorem:   
\newpage
\begin{theorem}\label{thm:theorem1}
    If $(\omega,k)$ satisfies \eqref{disp} then $\frac{\omega^2}{k^2}\geq \inf \limits_{z\in[0,H]} \frac{\mu_1(z)}{\rho_1(z)}$. 
\end{theorem}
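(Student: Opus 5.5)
Suppose, for contradiction, that $c^2:=\omega^2/k^2<\inf_{z\in[0,H]}\mu_1(z)/\rho_1(z)$. Then $\rho_1(z)\omega^2<\mu_1(z)k^2$ for every $z$, so
\[
q(z):=k^2\mu_1(z)-\rho_1(z)\omega^2>0 \quad\text{on } [0,H].
\]
We rewrite \eqref{eq1} in self-adjoint form by multiplying by $\mu_1$:
\[
\bigl(\mu_1 V_1'\bigr)' = q(z)\,V_1 ,\qquad V_1'(0)=0,\quad V_1(H)=1 .
\]
The aim is to show that $\mu_1(H)V_1'(H)>0$. Since $\mu_2\alpha_2> 0$ under the decay condition in \eqref{disp} (with $\alpha_2$ the positive root), this makes $\mu_1(H)V_1'(H)+\mu_2\alpha_2>0$, so \eqref{disp} cannot hold.

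\textbf{Step 1: a positive energy identity.} Multiply the self-adjoint equation by $V_1$ and integrate over $[0,H]$. Integrating by parts and using $V_1'(0)=0$ and $V_1(H)=1$ gives
\[
\mu_1(H)V_1'(H)=\int_0^H\bigl(\mu_1 (V_1')^2+q\,V_1^2\bigr)\,dz .
\]
The right-hand side is strictly positive because $\mu_1>0$, $q>0$, and $V_1\not\equiv0$ (since $V_1(H)=1$). This yields the contradiction immediately. One should note here that it also covers the degenerate case where $\alpha_2=0$ is permitted, because the left-hand side is strictly positive.

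\textbf{Step 2: well-posedness of the layer problem.} Step 1 presumes that the solution $V_1$ of \eqref{ode1} exists, since \eqref{disp} is defined through it. We therefore also record that in the regime $q>0$ the problem \eqref{ode1} is uniquely solvable. Suppose a nontrivial solution $W$ of the homogeneous problem existed, with $W'(0)=0$ and $W(H)=0$. The same identity, now with zero boundary term, gives $\int(\mu_1W'^2+qW^2)=0$, hence $W\equiv0$, a contradiction. So $V_1$ is well defined, and the hypothesis ``$(\omega,k)$ satisfies \eqref{disp}'' is meaningful only through this $V_1$.

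An alternative check is a maximum-principle argument. With $q>0$, $V_1$ can have no positive interior maximum or negative minimum, and combined with $V_1'(0)=0$ this forces $V_1$ to be monotone increasing to $1$, so again $V_1'(H)>0$.

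\textbf{Anticipated obstacle: regularity.} The integration by parts needs $\mu_1V_1'\in C^1$, which holds because $\mu_1\in C^1$ and $V_1\in C^2$. The only remaining delicate point is the sign convention $\alpha_2\ge 0$ implicit in \eqref{disp}, and this should be stated explicitly. The theorem's conclusion is then the contrapositive of the above: any $(\omega,k)$ satisfying \eqref{disp} must have $\omega^2/k^2\ge\inf_{z\in[0,H]}\mu_1(z)/\rho_1(z)$.
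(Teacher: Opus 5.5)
Your proof is correct, but it follows a different route from the paper.

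\textbf{The paper's route.} It argues in two stages. First it proves $V_1\ge 0$ on $[0,H]$ by a minimum-principle argument. A negative minimum cannot sit at $z=0$, because there $V_1''(0)=q(0)V_1(0)/\mu_1(0)<0$. It cannot sit at an interior point either, because of the sign of $(\mu_1V_1')'$ there. Only then does it integrate the divergence form $(\mu_1V_1')'=qV_1$ over $[0,H]$ \emph{without} a weight. This gives $\mu_1(H)V_1'(H)=\int_0^H qV_1\,dz>0$, and the sign of the right-hand side relies on the positivity just established.

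\textbf{Your route.} You multiply by $V_1$ before integrating. Your identity $\mu_1(H)V_1'(H)=\int_0^H\bigl(\mu_1(V_1')^2+qV_1^2\bigr)\,dz$ is therefore sign-definite whatever the sign of $V_1$. This removes the need for the positivity lemma and gives the conclusion in one line. The same identity with homogeneous data shows that the Sturm--Liouville problem has no eigenvalue in this regime. Hence the solution $V_1$ of \eqref{ode1} exists and is unique there, which the paper's proof simply presupposes. (Its Theorem~\ref{thm:exist_unique} treats only $\omega^2/k^2\ge\inf_{z\in[0,H]}\mu_1/\rho_1$.)

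\textbf{Trade-off.} The paper's route yields the qualitative fact that the layer solution has no nodes in the evanescent regime. Your sketched maximum-principle check recovers this, and even the stronger statement that $V_1$ is strictly increasing. Your energy argument is also the same mechanism the paper itself uses later for its generalization-error estimate.
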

\begin{proof}
: Suppose on the contrary $\frac{\omega^2}{k^2}<\inf \limits_{z\in[0,H]} \frac{\mu_1(z)}{\rho_1(z)}$, then $\rho_1(z)\omega^2-\mu_1(z)k^2<0$. 

We first prove that the solution $V_1$ of \eqref{ode1} is positive in $[0,H]$. If it is not true, then there exist an interval in $[0,H]$ where $V_1$ is negative and thus it attains a negative minimum at some $z_0\in[0,H)$.

Now, $z_0\neq 0$, as if it were so then from \eqref{ode1}, we would get that:
\begin{align*}
&V_1''(z_0)=-\frac{(\rho_1(z_0)\omega^2-\mu_1(z_0)k^2)V_1(z_0)}{\mu_1(z_0)}<0,\end{align*}
contradicting the fact that $z_0$ is a minima. Therefore $z_0\in(0,H)$. Now at this interior minima $z_0$, $V_1''(z_0)>0$ and $V_1'(z_0)=0$. Therefore \[(\mu_1V_1')'(z_0)=\mu_1(z_0)V_1''(z_0)>0,\]
but from \eqref{ode1}, we get 
\[(\mu_1V_1')'(z_0)=-(\rho_1(z_0)\omega^2-\mu_1(z)k^2)V_1(z_0)<0\text{ giving a contradiction.}\]

Therefore \begin{equation}  V_1(z)\geq0 \quad \forall z\in[0,H].\end{equation}
Now on we look at the sign of $V_1'(H)$, as it is the unknown component of \eqref{disp}. 

Applying integration by parts from 0 to $H$ on \eqref{ode1}, we get:
\[\mu_1(H)\frac{dV_1}{dz}(H)=-\int_0^H(\rho_1(z)\omega^2-\mu_1(z)k^2)V_1(z)dz>0.\]
Therefore, \[\mu_1(H)\frac{dV_1}{dz}(H)
+\mu_2
\sqrt{k^2-\frac{\rho_2\omega^2}{\mu_2}}>0,\]
which is a contradiction, as the dispersion relation \eqref{disp} will never be satisfied for such a pair of $(\omega,k)$. This completes the proof.
 \end{proof}
For the existance and uniqueness of \eqref{ode1}, we have the following:
\newpage
\begin{theorem}\label{thm:exist_unique}
    For fixed $\omega>0$, the classical solution of the boundary value problem \eqref{ode1}, exists uniquely $\forall k$ satisfying $\inf \limits_{z\in[0,H]} \frac{\mu_1(z)}{\rho_1(z)}\leq\frac{\omega^2}{k^2}<\frac{\mu_2}{\rho_2}$, except for possibly finitely many values of $k$.\end{theorem}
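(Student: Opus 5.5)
Work with the self-adjoint form $(\mu_1V_1')'+(\rho_1\omega^2-\mu_1k^2)V_1=0$ of \eqref{ode1}, which is valid because $\mu_1\in C^1[0,H]$ is positive and $\rho_1\in C[0,H]$. Fix $\omega>0$ and set $\lambda=k^2$. Then $g(z,k,\omega)=\rho_1\omega^2/\mu_1-\lambda$ is continuous in $z$ and depends affinely on the parameter $\lambda$.

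First, reduce the boundary value problem to an initial value problem. Let $\phi(z;\lambda)$ be the solution of the linear ODE with $\phi(0)=1$ and $\phi'(0)=0$. Since $f$ and $g$ are continuous, the standard Picard--Lindel\"of theory for linear equations gives a unique $C^2$ solution on all of $[0,H]$. Every solution satisfying $V_1'(0)=0$ is a scalar multiple $c\,\phi$, by uniqueness for the IVP. Hence \eqref{ode1} has a solution if and only if $\phi(H;\lambda)\neq 0$, in which case $V_1=\phi/\phi(H;\lambda)$. Uniqueness also holds in that case: the difference of two solutions is $c\phi$ with $c\phi(H)=0$, which forces $c=0$. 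If instead $\phi(H;\lambda)=0$, there is no solution, since $c\phi(H)=1$ is impossible. So the exceptional set is $\{k:\ \phi(H;k^2)=0\}$, which is exactly the set of Sturm--Liouville eigenvalues $\lambda$ of
\[
-(\mu_1V')'-\rho_1\omega^2 V=-\lambda\mu_1V, \qquad V'(0)=0,\ V(H)=0.
\]

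Second, show that this set is finite in the admissible window. The condition $\inf\mu_1/\rho_1\le\omega^2/k^2<\mu_2/\rho_2$ confines $k^2$ to the compact interval
\[
\lambda\in\left[\frac{\rho_2\omega^2}{\mu_2},\ \frac{\omega^2}{\inf(\mu_1/\rho_1)}\right].
\]
It therefore suffices to show that the zeros of $F(\lambda):=\phi(H;\lambda)$ are isolated. Because the equation depends analytically (affinely) on $\lambda$, $\phi(H;\lambda)$ is an entire function of $\lambda$; this follows from the Picard iteration converging uniformly on compact $\lambda$-sets, or from the standard analytic-parameter dependence theorem. It is not identically zero, for either of two reasons. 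For large negative $\lambda$, the argument of Theorem~\ref{thm:theorem1} (a maximum-principle argument) shows $\phi>0$. Alternatively, for sufficiently small $\lambda$ the coefficient $\rho_1\omega^2-\mu_1\lambda$ is negative, and then $\phi$ is increasing: integrating gives $\mu_1\phi'(z)=-\int_0^z(\rho_1\omega^2-\mu_1\lambda)\phi>0$ as long as $\phi>0$, so $\phi(H)\ge 1$. An entire function that is not identically zero has only finitely many zeros in a compact interval. The zeros of $\lambda$ then pull back to at most two values $\pm k$ each, so there are finitely many exceptional $k$.

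As an alternative to analyticity, I could invoke the classical Sturm--Liouville spectral theorem. With positive weight $\mu_1$, the spectrum is a discrete sequence of simple real eigenvalues accumulating only at $-\infty$ in this sign convention, so only finitely many lie in any bounded interval. I would mention this as a cross-check.

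The main obstacle is justifying the analytic dependence of $\phi(H;\lambda)$ on $\lambda$, or equivalently the discreteness of the spectrum, with only $\mu_1\in C^1$ and $\rho_1\in C$. The Picard-series route handles this cleanly. Write the system $Y'=A(z,\lambda)Y$ for $Y=(\phi,\mu_1\phi')$. Each Picard iterate is a polynomial in $\lambda$, and the iterates converge uniformly for $|\lambda|\le R$, which yields entirety. I would present that argument briefly.
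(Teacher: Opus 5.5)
Your reduction is exactly the paper's: the initial value problem with $\phi(0)=1$, $\phi'(0)=0$, every solution with $V_1'(0)=0$ being $c\,\phi$, and the exceptional set being the zeros of $\phi(H;k^2)$, i.e.\ the eigenvalues of the Sturm--Liouville problem with $V'(0)=0$, $V(H)=0$.

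The difference is in the finiteness step. The paper cites regular Sturm--Liouville theory: the eigenvalues are discrete with no finite accumulation point. Your main argument instead shows that the shooting function $F(\lambda)=\phi(H;\lambda)$ is entire, via Picard iteration for $(\phi,\mu_1\phi')$. It is not identically zero, so it has finitely many zeros on the compact $\lambda$-window.

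Your route is more self-contained and more robust. It needs only analytic dependence on the parameter and one point where $F\neq0$, not self-adjointness or a positive weight. It also makes explicit the boundedness of the admissible range of $k^2$, which the paper's step ``no finite accumulation point, hence finitely many'' uses silently.

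The paper's route is shorter given the citation and gives more structure. The eigenvalues in $\lambda=k^2$ are simple, real, bounded above and accumulate only at $-\infty$. So there are finitely many exceptional real $k$ overall, not just in the window.

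Your non-vanishing step needs a sign correction: both justifications are stated in the wrong regime. The coefficient $\rho_1\omega^2-\mu_1\lambda$ is negative for large \emph{positive} $\lambda$, specifically $\lambda>\max_z\rho_1\omega^2/\mu_1=\omega^2/\inf(\mu_1/\rho_1)$. That is precisely the regime of Theorem~\ref{thm:theorem1}. For small or large negative $\lambda$ the coefficient is positive and $\phi$ oscillates, which matches your own remark that eigenvalues accumulate at $-\infty$.

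With $\lambda$ taken in the correct regime, your computation $\mu_1\phi'(z)=-\int_0^z(\rho_1\omega^2-\mu_1\lambda)\phi\,ds>0$ gives $\phi(H)\ge1$. Hence $F\not\equiv0$ and the proof is complete.
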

\begin{proof}
: For a fixed $(\omega,k)$, let $\tilde{V}_1$ be the solution of of the associated intial value problem: 
\begin{equation}
\frac{d^2\tilde{V}_1}{dz^2}
+\frac{\mu_1'(z)}{\mu_1(z)}\frac{d\tilde{V}_1}{dz}
+\left(
\frac{\rho_1(z)}{\mu_1(z)}\omega^2-k^2
\right)\tilde{V}_1=0;\quad 
\tilde{V}_1(0)=1 \text{ }\& \text{ } \tilde{V}_1'(0)=0.
\label{reference}\end{equation}
 Since $\mu_1\in C^1[0,H]$, $\mu_1(z)>0$ and $\rho_1\in C[0,H]$, the standard existence and uniqueness theorem for linear ordinary differential equations guarantees that $\tilde{V}_1$ exists uniquely on $[0,H]$.

We first show that the boundary condition $V_1(H)=1$ can be satisfied if and only if $\tilde{V}_1(H)\neq0$. Indeed, any solution of \eqref{ode1} satisfying $V_1'(0)=0$ is of the form
$
V_1(z)=C \tilde{V}_1(z),
$
for some constant $C$. Therefore, $V_1(H)=1$ requires
$C \tilde{V}_1(H)=1$. Hence, if $\tilde{V}_1(H)\neq0$, the unique solution to \eqref{ode1} is
$
V_1(z)=\frac{\tilde{V}_1(z)}{\tilde{V}_1(H)}.
$

On the other hand, if $\tilde{V}_1(H)=0$, then $V_1(H)=0$ for every solution satisfying $V_1'(0)=0$ and hence the condition $V_1(H)=1$ cannot be satisfied.

Thus we have to look for nontrivial solution $\tilde{V}_1$ satisfying the following: 
\begin{equation}
\frac{d}{dz}\bigg({\mu_1(z)}\frac{d\tilde{V}_1}{dz}\bigg)
+
{\rho_1(z)}\omega^2={\mu_1(z)}k^2
\tilde{V}_1;\quad 
\tilde{V}_1'(0)=0 \text{ }\& \text{ } \tilde{V}_1(H)=0.
\label{sform}\end{equation}
Thus, with eigenvalues  $k^2$, this is a regular Sturm-Liouville problem on the finite interval $[0,H]$. By the  Sturm-Liouville theory, its eigenvalues are discrete and have no finite accumulation point.

Hence, except possibly for finitely many values of $k$, we have $\tilde{V}_1(H)\neq0$ and therefore \eqref{ode1} admits a unique classical solution.
\end{proof}

Let $\mathscr{A}\subset \mathbb{R}$, be set of eigenvalues($k$) satisfying \eqref{sform}, then using the above Theorems~\ref{thm:theorem1} and \ref{thm:exist_unique}, the constraints on the dispersion relation are further updated as follows:
 \begin{equation}
\mu_1(H)\frac{dV_1}{dz}(H)
+\mu_2
\sqrt{k^2-\frac{\rho_2\omega^2}{\mu_2}}
=0; \quad  k\in
\left(
\omega\sqrt{\frac{\rho_2}{\mu_2}},
\frac{\omega}{
\sqrt{\displaystyle\inf_{z\in[0,H]}
\frac{\mu_1(z)}{\rho_1(z)}}}
\right]
\cap
\left(\mathbb{R}_+\setminus\mathscr{A}\right).
\label{disp_main}\end{equation}
\subsection{Numerical Formulation}\label{fdscheme}
For general $\mu_1(z)\in C^1([0,H];(0,\infty))$ and $\rho_1(z) \in C([0,H];(0,\infty))$ analytical solutions for \eqref{ode1} is not always possible, therefore we proceed numerically. The interval $0 \le z \le H$ is discretized using the uniform grid $ z_i = ih,\quad i=0,1,\ldots,N,$ where $h = H/N$ is the uniform grid spacing. This discretization consists of $N+1$ grid points. Applying the second-order central difference approximation to \eqref{ode1} at the interior grid points
$i=1,\ldots,N-1$ yields

\[
C_iV_{i-1}+B_iV_i+A_iV_{i+1}=0,
\]

where

\begin{equation}
C_i=\frac{1}{h^2}-\frac{f(z_i)}{2h},\qquad
B_i=g(z_i,k,\omega)-\frac{2}{h^2},\qquad
A_i=\frac{1}{h^2}+\frac{f(z_i)}{2h}.
\label{coeff_matrix}
\end{equation}

The Neumann boundary condition at $z=0$ gives

\[
\left[g(0,k,\omega)-\frac{2}{h^2}\right]V_0
+\frac{2}{h^2}V_1=0.
\]

Accordingly, the coefficients corresponding to the first row of the linear system are defined as

\[
B_0=g(0,k,\omega)-\frac{2}{h^2},\qquad
A_0=\frac{2}{h^2},\qquad
C_0=0.
\]

The Dirichlet boundary condition at $z=H$ is simply

\[
V_N=1.
\]

The resulting linear system is given as follows:

\begin{equation}
\begin{bmatrix}
B_0 & A_0 & 0 & \cdots & 0 & 0\\
C_1 & B_1 & A_1 & \ddots & \vdots & \vdots\\
0 & C_2 & B_2 & \ddots & 0 & 0\\
\vdots & \ddots & \ddots & \ddots & A_{N-2} & 0\\
0 & \cdots & 0 & C_{N-1} & B_{N-1} & A_{N-1}\\
0 & \cdots & 0 & 0 & 0 & 1
\end{bmatrix}
\begin{bmatrix}
V_0\\
V_1\\
V_2\\
\vdots\\
V_{N-1}\\
V_N
\end{bmatrix}
=
\begin{bmatrix}
0\\
0\\
0\\
\vdots\\
0\\
1
\end{bmatrix},
\label{matrix}\end{equation}

which is solved using the Thomas algorithm. 
 For a fixed $\omega$, let $M(k)$ denote the coefficient matrix in \eqref{matrix}. We now present results regarding its singularities.

\begin{theorem}\label{thm:theorem2}
 The set
$
\mathcal S=\{k\in\mathbb R:\det M(k)=0\}
$
has Lebesgue measure zero.
\end{theorem}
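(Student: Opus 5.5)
The plan is to show that $p(k):=\det M(k)$ is a polynomial in $k$ that is not identically zero. A nonzero polynomial has only finitely many real roots, so $\mathcal S$ is finite and in particular has Lebesgue measure zero.

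First, only the diagonal entries $B_i=g(z_i,k,\omega)-2/h^2=\rho_1(z_i)\omega^2/\mu_1(z_i)-k^2-2/h^2$, for $i=0,\dots,N-1$, depend on $k$. The off-diagonal entries $A_i, C_i$ and the final row $(0,\dots,0,1)$ are independent of $k$. Expanding along the last row gives
\[
\det M(k)=\det T(k),
\]
where $T(k)$ is the leading $N\times N$ tridiagonal block with diagonal $B_0,\dots,B_{N-1}$. Because each $B_i$ is affine in $k^2$, the Leibniz expansion shows that $\det T(k)$ is a polynomial in $k^2$ of degree at most $N$.

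Second, the leading coefficient is nonzero. In the Leibniz expansion the only permutation that produces the monomial $(k^2)^N$ is the identity. It contributes $\prod_{i=0}^{N-1}B_i$, whose coefficient of $k^{2N}$ is $(-1)^N$. Every other permutation moves at least two indices, so it uses at most $N-2$ diagonal entries and has degree at most $N-2$ in $k^2$. Hence
\[
\det M(k)=(-1)^N k^{2N}+\text{lower order terms}\not\equiv 0,
\]
a nonzero polynomial of degree $2N$ in $k$.

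Consequently $\mathcal S$ has at most $2N$ real elements. A finite set has Lebesgue measure zero, which proves the claim and in fact gives the stronger statement that $\mathcal S$ is finite.

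The only real obstacle is making sure no cancellation kills the top coefficient. That cancellation cannot occur here, because the off-diagonal entries are constant in $k$, so non-identity permutations have strictly lower degree. If one prefers to avoid the combinatorial argument, an alternative is to use the three-term recurrence $D_{j}=B_{j}D_{j-1}-A_{j-1}C_{j}D_{j-2}$ for the leading principal minors. Induction on $j$ then shows directly that $D_j$ has leading term $(-1)^{j+1}k^{2(j+1)}$.
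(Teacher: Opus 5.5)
Your proof is correct and follows essentially the paper's route. The paper also shows that $\det M(k)=D_{N-1}(k)$ is a nonzero polynomial with leading term $(-1)^N k^{2N}$, using the leading-principal-minor recurrence $D_i=B_iD_{i-1}-A_{i-1}C_iD_{i-2}$ that you give as your alternative; your Leibniz-degree argument and the explicit cofactor expansion along the last row make the same idea more precise and rightly strengthen the conclusion to $\mathcal S$ being finite, with at most $2N$ points.
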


\begin{proof}
: Let $D_i(k)$ denote the determinant of the $i^\text{th}$ leading principal submatrix of $M(k)$. Then
\[
D_0=B_0,\qquad
D_1=B_0B_1-A_0C_1,
\]
and, for $i\ge2$, the determinants satisfy the recurrence
\[
D_i=B_iD_{i-1}-A_{i-1}C_iD_{i-2}.
\]
Since $A_i$, $B_i$ and $C_i$ are polynomials in $k$, it follows by induction that each $D_i$ is also a polynomial . In particular, $D_{N-1}$ is a polynomial whose highest-degree term in $k$ is $(-1)^N$.
 Hence $D_{N-1}\not\equiv0$.

Since
$
\det M(k)=D_{N-1}(k),
$
it follows that $\det M(k)$ is a nonzero polynomial. Therefore, its zero set
$
\mathcal S=\{k\in\mathbb R:\det M(k)=0\}
$
has Lebesgue measure zero.
\end{proof}
Since the singular set has Lebesgue measure zero, a singular parameter $k$ can almost always be avoided by an arbitrarily small perturbation. The following corollary follows immediately.
\newpage
 \begin{corollary}\label{corollary}
Let $k\in \mathbb{R}$ be a random variable whose distribution is absolutely continuous. Then the probability of attaining singularity is zero i.e, 
$
\mathbb{P}\!\left(\det M(k)=0\right)=0.
$
Equivalently, the coefficient matrix $M(k)$ is nonsingular almost surely.
\end{corollary}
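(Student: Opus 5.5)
The corollary follows from Theorem~\ref{thm:theorem2} together with the definition of absolute continuity. Let $P_k$ be the distribution of $k$ on $(\mathbb R,\mathcal B(\mathbb R))$. Absolute continuity with respect to Lebesgue measure $\lambda$ means that $\lambda(E)=0$ implies $P_k(E)=0$ for every Borel set $E$. Equivalently, by Radon--Nikodym, there is a density $p\ge 0$ with $P_k(E)=\int_E p\,d\lambda$. The plan is to take $E=\mathcal S$.

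First I will check that $\{\det M(k)=0\}$ is a genuine event, i.e.\ that $\mathcal S$ is Borel. By the proof of Theorem~\ref{thm:theorem2}, $\det M(k)=D_{N-1}(k)$ is a polynomial in $k$, so $k\mapsto\det M(k)$ is continuous. Hence $\mathcal S$ is the preimage of the closed set $\{0\}$ and is closed. Moreover, $D_{N-1}\not\equiv 0$, so $\mathcal S$ is in fact finite, with at most $\deg D_{N-1}$ points.

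Next, Theorem~\ref{thm:theorem2} gives $\lambda(\mathcal S)=0$. Then
\[
\mathbb P(\det M(k)=0)=P_k(\mathcal S)=\int_{\mathcal S}p\,d\lambda=0,
\]
since the integral of a nonnegative function over a Lebesgue-null set vanishes. Taking complements gives $\mathbb P(\det M(k)\neq 0)=1$, which is the statement that $M(k)$ is nonsingular almost surely.

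No step here is a real obstacle. The only point needing care is the one inherited from Theorem~\ref{thm:theorem2}: that $D_{N-1}$ is not the zero polynomial. This holds because each $B_i$ contains the term $-k^2$ while $A_i,C_i$ do not depend on $k$, so the leading coefficient of $D_{N-1}$ is a nonzero constant times $k^{2N}$. I take this as established by Theorem~\ref{thm:theorem2}. If I wanted to avoid relying on the exact leading coefficient, I would note that it suffices that $\mathcal S$ is finite, or even just countable, since countable sets are Lebesgue-null.
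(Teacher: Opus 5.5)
Your proof is correct and follows the paper's argument. Theorem~\ref{thm:theorem2} gives that $\mathcal S$ is Lebesgue-null, and absolute continuity of the law of $k$ then gives $\mathbb P(k\in\mathcal S)=0$. Your extra remarks are accurate refinements that the paper leaves implicit: $\mathcal S$ is closed and hence a Borel event, and it is in fact finite because $\det M(k)=D_{N-1}(k)$ has leading term $(-1)^N k^{2N}$.
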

 \begin{proof}
: By Theorem~\ref{thm:theorem2}, the singular set
$
\mathcal S=\{k\in\mathbb R:\det M(k)=0\}
$
has Lebesgue measure zero. Since the distribution of $k$ is absolutely continuous with respect to the Lebesgue measure,
$
\mathbb P\big(k\in\mathcal S\big)=0.
$
Equivalently,
$
\mathbb P\!\left(\det M(k)=0\right)=0,
$
and hence $M(k)$ is nonsingular almost surely.
\end{proof}

\begin{remark}Although the Thomas algorithm has a linear computational time complexity of $\mathscr{O}(N)$, the overall computational cost can still become significant. This is because the finite dispersion relation \eqref{disp_main} is implicit and its solution requires a root-finding procedure in which the tridiagonal linear system must be solved repeatedly. Furthermore, in practical applications such as the inversion of observed dispersion data, the dispersion curves must be computed and matched numerous times during the optimization process. Our next target is thus to improve this computational complexity by employing Physics Induced Neural Networks (PINNs).
\end{remark}
Before moving forward, we present a result regarding the oscillatory nature of the ODE1 \eqref{ode1}.
\begin{theorem}\label{thm:theorem3}
If
$
\frac{\omega^2}{k^2}=c^2>\inf\limits_{z\in[0,H]}\frac{\mu_1(z)}{\rho_1(z)},
$
then the solution of \eqref{ode1} becomes increasingly oscillatory as $\omega$ or $H$ increases.
\end{theorem}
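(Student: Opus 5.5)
The plan is to make ``increasingly oscillatory'' precise as a count of zeros of the solution on $[0,H]$. Then I would show this count is bounded below by a quantity that grows without bound as $\omega\to\infty$ (with $c$ fixed, so $k=\omega/c$) or as $H\to\infty$. The tool is Sturm comparison applied to the self-adjoint form of \eqref{ode1}, namely $(\mu_1V_1')'+\mu_1 k^2\big(\rho_1 c^2/\mu_1-1\big)V_1=0$. This follows by multiplying \eqref{ode1} by $\mu_1$ and using $\rho_1\omega^2-\mu_1k^2=k^2(\rho_1c^2-\mu_1)$. By Theorem~\ref{thm:exist_unique}, away from a discrete exceptional set we have $V_1=\tilde V_1/\tilde V_1(H)$. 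So the zeros of $V_1$ coincide with those of the initial value solution $\tilde V_1$ of \eqref{reference}, and it suffices to count the zeros of $\tilde V_1$.

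\emph{Step 1 (localise the oscillatory region).} Since $c^2>\inf_{[0,H]}\mu_1/\rho_1$ and $\mu_1/\rho_1$ is continuous, there is a point $z^*$, a number $\delta>0$ and a subinterval $I=[\alpha,\beta]\subset[0,H]$ of positive length $L$ on which $\rho_1c^2-\mu_1\ge\delta$. On $I$ we then have $q(z):=\rho_1\omega^2-\mu_1k^2\ge k^2\delta=\omega^2\delta/c^2$. In the $H$-increasing case I would assume the profile is extended (or given on a growing domain) so that $\sup_{z}(\rho_1c^2-\mu_1)>0$ persists. The natural assumption is that this positivity set has measure growing with $H$, or more simply that $\rho_1c^2-\mu_1\ge\delta$ on a set containing intervals of total length $\gtrsim H$. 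I would state this hypothesis explicitly, since the theorem is vague on how $\mu_1,\rho_1$ depend on $H$.

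\emph{Step 2 (comparison).} On $I$, compare $(\mu_1V')'+qV=0$ with the constant-coefficient majorant/minorant equation $(M V')'+m\,\omega^2\delta c^{-2}V=0$, where $M=\max_I\mu_1$ and $m=1$. Equivalently, compare with $V''+\kappa^2V=0$ with $\kappa=\omega\sqrt{\delta/(c^2M)}$, after the Liouville change of variable $s=\int dz/\mu_1$ to remove the first-derivative term. The Sturm--Picone comparison theorem then says that every nontrivial solution has at least one zero in each subinterval of length $\pi/\kappa'$, where $\kappa'$ is $\kappa$ adjusted by the factor $\min\mu_1/\max\mu_1$ coming from the change of variables. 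Hence the number of zeros satisfies
\[
Z(\tilde V_1)\ \ge\ \Big\lfloor \frac{L\,\omega}{\pi c}\sqrt{\frac{\delta\,\min_I\mu_1}{(\max_I\mu_1)^2}}\Big\rfloor ,
\]
which tends to $\infty$ linearly in $\omega$. For growing $H$ the same bound applies with $L\propto H$.

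\emph{Step 3 (conclusion and obstacle).} Combining Steps 1 and 2 shows that the zero count, and hence the local wavelength $\sim 2\pi/(\omega\sqrt{\delta}/c)$ relative to $H$, grows unboundedly. This is the precise meaning of ``increasingly oscillatory.'' A Prüfer-angle argument gives the same conclusion and also an upper bound of the same order, if a two-sided statement is wanted. The main obstacle is the handling of the first-derivative term $f V'$ and the passage to self-adjoint/Liouville form with only $\mu_1\in C^1$ and $\rho_1\in C$. The Sturm--Picone version for $(pV')'+qV=0$ with continuous $p>0$, $q$ avoids needing more regularity, so I would invoke it directly rather than transforming to normal form. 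The second delicate point is the $H$ limit, where an explicit assumption on the profile's behaviour as the layer thickens must be imposed.
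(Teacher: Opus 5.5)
Your argument is correct, but it compares different equations than the paper does. The paper also passes to self-adjoint form and rescales to $\xi=z/H$, getting the coefficient $H^2\omega^2\mu_1(\rho_1/\mu_1-1/c^2)$. It then compares the equation at $\omega$ with the same equation at a smaller $\omega_0$, and likewise for $H$. The coefficient is pointwise larger, so by Sturm comparison the solution has at least as many zeros. That gives monotonicity of the zero count, not growth. The pointwise ordering also rests on the claim that $\mu_1(\rho_1/\mu_1-1/c^2)>0$ on all of $[0,1]$ ``by hypothesis''. That claim actually needs $c^2>\sup\mu_1/\rho_1$, not the stated $c^2>\inf\mu_1/\rho_1$. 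The $\omega$-part survives if $k$ rather than $c$ is held fixed, as the paper's last sentence says, because $\rho_1\omega^2-\mu_1k^2$ then increases everywhere regardless of sign. The $H$-part genuinely uses the sign. Your localisation to an interval $I$ around a minimiser of $\mu_1/\rho_1$ uses exactly what the stated hypothesis provides. Comparing with a constant-coefficient equation then gives an explicit lower bound on the zero count that grows linearly in $\omega$ at fixed $c$. That is unboundedness with a rate, but not monotonicity: at fixed $c$ the coefficient decreases where it is negative, so the paper's two-frequency comparison does not apply. Two minor points. First, comparing directly with $(MV')'+(\omega^2\delta/c^2)V=0$, $M=\max_I\mu_1$, via Sturm--Picone gives a larger lower bound, of order $L\omega\sqrt{\delta/M}/(\pi c)$, without the Liouville factor; your stated bound is weaker but valid. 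Second, the regularity issue you flag is not an obstacle, since $\mu_1\in C^1$ with $\mu_1>0$. On $H$, your caution about how the profile depends on $H$ is fair. Under the paper's implicit convention that the profiles are fixed in $\xi$, however, your argument needs no extra hypothesis. On the fixed subinterval of $[0,1]$ the coefficient is at least $H^2\omega^2\delta/c^2$, so the same comparison gives a zero count growing like $H\omega$.
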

\begin{proof}
: Equation \eqref{ode1} can be written in the self-adjoint form
\[
\frac{d}{dz}\left(\mu_1(z)\frac{dV_1}{dz}\right)
+\left(\rho_1(z)\omega^2-\mu_1(z)k^2\right)V_1=0,
\qquad z\in[0,H].
\]

Introducing the normalized coordinate
$\xi=\frac{z}{H},$

so that $\xi\in[0,1]$. Thus the above equation becomes
\[
\frac{d}{d\xi}\left(\mu_1(\xi)\frac{dV_1}{d\xi}\right)
+H^2\left(\rho_1(\xi)\omega^2-\mu_1(\xi)k^2\right)V_1=0.
\]
Since $c=\omega/k$, this may be rewritten as
\[
\frac{d}{d\xi}\left(\mu_1(\xi)\frac{dV_1}{d\xi}\right)
+H^2\omega^2\mu_1(\xi)
\left(
\frac{\rho_1(\xi)}{\mu_1(\xi)}
-\frac1{c^2}
\right)V_1=0.
\]

By hypothesis,
$
\mu_1(\xi)
\left(
\frac{\rho_1(\xi)}{\mu_1(\xi)}
-\frac{1}{c^2}
\right)>0
\quad\text{on }[0,1].
$
Now let $\omega>\omega_0>0$. The coefficient of $V_1$ corresponding to $\omega$ is pointwise larger than that corresponding to $\omega_0$. Hence, by the Sturm Comparison Theorem, every nontrivial solution associated with $\omega$ possesses at least as many zeros on $[0,1]$ as the corresponding solution associated with $\omega_0$. Thus, increasing $\omega$ (while $k$ fixed) increases the oscillatory behaviour of the solution.

The same conclusion follows when $\omega$ is fixed and $H$ increases, since $H$ appears as the positive multiplicative factor $H^2$ in the coefficient of $V_1$.
\end{proof}
The assumption for the above equation is natural, since we are only interested in propagating waves \eqref{disp_main}. We are now in a position to employ a physics-informed neural network (PINN) to solve \eqref{ode1}. Since the trained network can be evaluated rapidly, it provides a fast approximation of the solution $V_1^{'}(H)$, which is subsequently used to compute the dispersion relation \eqref{disp_main}.
\subsection {Hybrid Physics Induced Neural Network Formulation }
Let $\Gamma$ be the $n$ dimentional admissible parameter space associated with the heterogeneity functions $\mu_1(z)$ and $\rho_1(z)$. For every $(\gamma,k)\in\Gamma\times[k_{\min},k_{\max}]$, let
$
V_1^{\gamma,k}(z)
$
denote the corresponding solution of the boundary value problem \eqref{ode1}. Our objective is to construct a neural network approximation of this solution, denoted by
$
V_{1,\theta}^{\gamma,k}(z),
$
where $\theta$ represents the trainable parameters of the neural network. Throughout this work, the layer thickness $H$ is kept fixed. Moreover, for each prescribed angular frequency $\omega$, a separate neural network is trained. This choice is motivated by Theorem~\ref{thm:theorem3}, which shows that increasing $\omega$ and $H$ increases the oscillatory nature of the solution, thereby increasing the complexity of the approximation problem. Consequently, the dependence of the solution on these quantities is suppressed in the notation.

The approximation $V_{1,\theta}^{\gamma,k}$ is realized by a fully connected feed-forward neural network represented as the composition
\[
V_{1,\theta}^{\gamma,k}
=
T_L\circ\sigma\circ T_{L-1}\circ\cdots\circ\sigma\circ T_1,
\]
where $T_i(z)=W_i z+b_i$ are affine transformations parameterized by the trainable weights ($W_i$) and biases ($b_i$) contained in $\theta$ and $\sigma:\mathbb{R}\rightarrow\mathbb{R}$ is a sufficiently smooth activation function. 
\subsection{Loss Function}

The proposed loss function is constructed from five independent losses corresponding to the residue of the governing differential equation and prescribed boundary conditions, the finite difference solution and the end derivative. 

\subsubsection{ODE and boundary residuals }

Since $V_{1,\theta}^{\gamma,k}\in C^2[0,H]$, we define the ODE residue ($\mathcal{R}_{ode}$) and the boundary residues ($\mathcal{R}_{b1}$ and $\mathcal{R}_{b2}$) as follows:
\begin{align}
    &\mathcal{R}_{ode}(z)=\frac{d^2V_{1,\theta}^{\gamma,k}}{dz^2}
+
\frac{\mu_1'(z)}{\mu_1(z)}
\frac{dV_{1,\theta}^{\gamma,k}}{dz}
+
\left(
\frac{\rho_1(z)}{\mu_1(z)}\omega^2-k^2
\right)V_{1,\theta}^{\gamma,k},
\\&\mathcal{R}_{b1}=\frac{dV_{1,\theta}^{\gamma,k}}{dz}\Bigg|_{z=0},
\\&\mathcal{R}_{b2}=V_{1,\theta}^{\gamma,k}(H)-1.
\label{residuals1}\end{align} 
 
\subsubsection{Numerical Residuals}
In addition to the ODE residuals, we incorporate supervision from the finite difference solution obtained in Section~\ref{fdscheme} to increase the training accuracy of the neural network. This is motivated by our ultimate goal of using the trained network to compute the dispersion relation \eqref{disp_main}, where higher prediction accuracy of $\frac{dV_{1,\theta}^{\gamma,k}}{dz}$ is beneficial. Let $V_{1,h}^{\gamma,k}(z)$ denote the numerical solution computed using the numerical scheme \eqref{fdscheme}. This solution serves as a reference to guide the neural network during training and improve the convergence of the optimization process. We measure the discrepancy between the neural network approximation and the finite difference solution in the $H^1$-norm. Furthermore, as the dispersion relation \eqref{disp_main} depends explicitly on the interface derivative $V_1'(H)$, an additional loss component is introduced to directly penalize the error in the predicted derivative at the interface $z=H$. Therefore the finite difference residue is given by:
\begin{equation}
\mathscr{R}_{{FD}}(z)^2=\bigg(V_{1,\theta}^{\gamma,k}(z)-V_{1,h}^{\gamma,k}(z)\bigg)^2+\bigg(\frac{dV_{1,\theta}^{\gamma,k}(z)}{dz}-D_h(V_{1,h}^{\gamma,k}(z))\bigg)^2.
\end{equation}
and we also have the interface residue as follows:
\begin{equation}
    \mathscr{R}_{int}=\frac{dV_{1,\theta}^{\gamma,k}(H)}{dz}-D_h(V_{1,h}^{\gamma,k}(H)).
\end{equation}
Where, $D_h(V_{1,h}^{\gamma,k}(z))$ is finite difference derivative with mesh size $h$ and order of accuracy $O(h^\alpha)$. 

The composite loss function $\mathscr{L}_{\theta}$ is thus given by:
\begin{equation}
\mathscr{L}_{\theta}
=
l_{ode}\underbrace{\|\mathscr{R}_{ode}(z)\|^2_{L^2([0,H])}}_{T_{ode}}
+
l_{b1}\underbrace{\mathscr{R}_{b1}^2}_{T_{b1}}
+
l_{b2}\underbrace{\mathscr{R}_{b2}^2}_{T_{b2}}
+
l_{FD}\underbrace{\|\mathscr{R}_{FD}(z)\|_{L^2([0,H])}^2}_{T_{FD}}
+
l_{int}\underbrace{\mathscr{R}_{int}^2}_{T_{int}}.
\label{loss}\end{equation}
Here $l_{ode}, l_{b1}, l_{b2}, l_{FD}\text{ and }l_{int}$ are the learning-rate coefficients. The quantities $T_{ode}, T_{b1}, T_{b2}, T_{FD}\text{ and }T_{int}$ denote the corresponding training errors. For computing $T_{ode}$ and $T_{FD}$, the corresponding $L^2$ norms are approximated using a quadrature rule. Let $\{z_i\}_{i=1}^{N}\subset[0,H]$ denote the quadrature points and let $\{d_i\}_{i=1}^{N}$ denote the corresponding quadrature coefficients. Then we approximate $T_{ode}$ and $T_{FD}$,
\begin{equation}
\bigg|T_{ode}
-
\sum_{i=1}^{N}d_i\mathscr{R}_{ode}^{\,2}(z_i)\bigg|=\bigg|\int_{0}^{H}\mathscr{R}_{ode}^{\,2}(z)\,dz
-
\sum_{i=1}^{N}d_i\mathscr{R}_{ode}^{\,2}(z_i)\bigg|\leq C_{q_1} N^{-\psi}
\label{quad1}\end{equation}
and similarly,
\begin{equation}
\Bigg|T_{FD}-\int_{0}^{H}\mathscr{R}_{FD}^{\,2}(z)\,dz\Bigg|
=\Bigg|\int_{0}^{H}\mathscr{R}_{FD}^{\,2}(z)\,dz
-
\sum_{i=1}^{N}d_i\mathscr{R}_{FD}^{\,2}(z_i)\Bigg|\leq C_{q_2}N^{-\psi}.
\label{quad2}\end{equation}
Where $C_{q_1}$ and $C_{q_2}$ depends on the regularity of $\mathscr{R}_{ode}$ and $\mathscr{R}_{FD}$ respectively and $\psi>0$ depends on the concerned quadrature rule. 

 For computing $\mathscr{R}_{FD}$ and $\mathscr{R}_{int}$ we need stable data set for the numerical solution ($V_{1,h}^{\gamma,k}$). In view of Theorem~\ref{thm:theorem2}, the set of singular wavenumbers corresponding to any fixed frequency $\omega$ has Lebesgue measure zero. Hence, whenever a singular matrix is detected during the computation, the wavenumber is replaced by a perturbed value
$
\tilde{k}\sim U(k-\epsilon,k+\epsilon),
$
where $\epsilon>0$ is arbitrarily small. Since the probability of sampling a point from a measure-zero subset is zero, the perturbed parameter is nonsingular almost surely. The complete pseudocode of the PINN Algorithim is presented in Algorithm~\ref{alg1}, where Steps~5-10 outline the strategy for handling singularities.       
\begin{algorithm}[H]
\caption{Generalized PINN Algorithm}
\label{alg1}
\begin{algorithmic}[1] 
\Require $\omega,H,\Gamma,k_{\min},k_{\max},\epsilon$ and $N$.
\Ensure Neural Network solution $\theta^*$ of ODE1 \eqref{ode1}.
\State Initialise weights and biases $\theta$.
\State Initialise $s\leftarrow$ TRUE. \Comment{singularity detector}
\While{$e<e_{\max}$} \Comment{$e_{\max}$($\max$ epochs).}
    \State Sample $(\gamma,k)\sim U[\Gamma\times[k_{\min},k_{\max}]]$
    \While{(s$==$TRUE)}
        \If {Computation of $V_{1,h}^{\gamma,k}$ encounters a divide by zero error}
            \State Sample $k\sim U[k-\epsilon,k+\epsilon]$. \Comment{singularity control step}
        \Else
            \State $s\leftarrow$ FALSE.
        \EndIf
    \EndWhile
    \State Evaluate the neural network solution at the collocation points ($z_i$ s.t. $0\leq i\leq N$).
    \State Compute ODE and boundary residuals: $\mathscr{R}_{ode},\mathscr{R}_{b1}$ and $\mathscr{R}_{b2}.$
    \State Compute Numerical Residuals: $\mathscr{R}_{FD}$ and $\mathscr{R}_{int}$.
    \State Compute Loss $\mathscr{L}_{\theta}$ and update $\theta$ by minimizing $\mathscr{L}_{\theta}$.
    \State $e\leftarrow e+1$.
\EndWhile
\end{algorithmic}
\end{algorithm}

\subsubsection{Generalization Error Estimates}
 For $k\notin \mathscr{A}$, the generalization error $\mathscr{E}$ is defined as the $L^2$-norm of the difference between the classical solution $(V_1^{\gamma,k})$ and the corresponding neural network approximation $(V_{1,\theta}^{\gamma,k})$ \cite{mishra2023estimates} i.e,
 \begin{align}
 \mathscr{E}^{\gamma,k}=& \bigg(\int_0^H\bigg|V_1^{\gamma,k}(z)-V_{1,\theta}^{\gamma,k}(z)\bigg|^2dz\bigg)^{\frac{1}{2}} =\bigg(\int_0^H\bigg|\hat{V}_1^{\gamma,k}(z)\bigg|^2dz\bigg)^{\frac{1}{2}}.\label{E_def}\end{align}
 Where $\hat{V}_1^{\gamma,k}=V_1^{\gamma,k}(z)-V_{1,\theta}^{\gamma,k}(z)$ is the error term. The ODE and the boundary residuals \eqref{residuals1} in terms of $\hat{V}_{1}^{\gamma,k}$ are as follows:

 \begin{align}
    &\mathcal{R}_{ode}(z)=\frac{d^2\hat{V}_{1}^{\gamma,k}}{dz^2}
+
\frac{\mu_1'(z)}{\mu_1(z)}
\frac{d\hat{V}_{1}^{\gamma,k}}{dz}
+
\left(
\frac{\rho_1(z)}{\mu_1(z)}\omega^2-k^2
\right)\hat{V}_{1}^{\gamma,k},
\label{ode3}\\&\mathcal{R}_{b1}=\frac{d\hat{V}_{1}^{\gamma,k}}{dz}\Bigg|_{z=0}\label{ode4},
\\&\mathcal{R}_{b2}=\hat{V}_{1}^{\gamma,k}(H).\label{ode5}
\end{align} 
\vspace{-1pt}
We now present bounds w.r.t. the numerical residuals which we will need later for the composite Generalization error estimate.
\newpage
\begin{lemma}\label{lemma5}
For $k\notin\mathscr{A}$, the following estimates hold:
\newline (i) $ \bigg|\frac{d\hat{V}_{1}^{\gamma,k}}{dz}(H)\bigg|\leq O\bigg(\frac{H}{N}\bigg)+\bigg|\mathscr{R}_{int}\bigg|.$\\ (ii) $\|\hat{V}_1^{\gamma,k}\|_{H^1([0,H])}^2\leq O\bigg(\bigg(\frac{H}{N}\bigg)^{\min(\alpha,1)}\bigg)+2\int_0^H\mathscr{R}_{FD}^2dz.$ 
\\(iii) $ |\hat{V}_1^{\gamma,k}(0)|\leq\sqrt{\max\bigg\{\frac{2}{H},2H\bigg\}\bigg( O\bigg(\bigg(\frac{H}{N}\bigg)^{\min(\alpha,1)}\bigg)+2\int_0^H\mathscr{R}_{FD}^2dz\bigg).}$ 
\end{lemma}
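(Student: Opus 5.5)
The plan is to split every error quantity into a finite-difference discretization error and a network-versus-FD residual, using the triangle inequality and $(a+b)^2\le 2a^2+2b^2$. Since $k\notin\mathscr{A}$, Theorem~\ref{thm:exist_unique} gives a unique classical solution $V_1^{\gamma,k}$. We may also assume $M(k)$ is nonsingular (Theorem~\ref{thm:theorem2}, Corollary~\ref{corollary}), so $V_{1,h}^{\gamma,k}$ is well defined. We then take as given the standard convergence of the second-order central scheme of Section~\ref{fdscheme}: $\max_i|V_1^{\gamma,k}(z_i)-V_{1,h}^{\gamma,k}(z_i)|=O(h^2)$ with $h=H/N$, extended to $[0,H]$ by piecewise-linear interpolation. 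With that interpolation we have $\|V_1-V_{1,h}\|_{L^2}=O(h)$ and $\|V_1'-(V_{1,h})'\|_{L^2}=O(h)$.

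\emph{Step 1: part (i).} Write
\[
\frac{d\hat V_1^{\gamma,k}}{dz}(H)=\Big(V_1'(H)-D_h V_{1,h}(H)\Big)+\Big(D_hV_{1,h}(H)-\frac{dV_{1,\theta}^{\gamma,k}}{dz}(H)\Big).
\]
The second bracket is $-\mathscr{R}_{int}$. For the first, split again as $V_1'(H)-D_hV_1(H)$, which is the truncation error $O(h^\alpha)$ of the one-sided derivative formula, plus $D_h(V_1-V_{1,h})(H)$. Because $D_h$ divides nodal errors of size $O(h^2)$ by $h$, the latter is $O(h)$. Since $\alpha\ge1$ for any consistent formula, the total is $O(H/N)$.

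\emph{Step 2: part (ii).} Pointwise,
\[
|\hat V_1|^2+|\hat V_1'|^2\le 2\big(|V_1-V_{1,h}|^2+|V_1'-D_hV_{1,h}|^2\big)+2\mathscr{R}_{FD}^2 .
\]
Integrate over $[0,H]$. The first group is bounded by the interpolation and scheme error together with the truncation error of $D_h$, giving $O(h^{2})+O(h^{2\alpha})+O(h^2)$. Since $h<1$, this is bounded by $O(h^{\min(\alpha,1)})$, possibly generously. This yields (ii).

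\emph{Step 3: part (iii).} This is a one-dimensional trace/Sobolev inequality. For any $z\in[0,H]$, write $\hat V_1(0)=\hat V_1(z)-\int_0^z\hat V_1'$, square, use $(a+b)^2\le2a^2+2b^2$ and Cauchy--Schwarz to get
\[
|\hat V_1(0)|^2\le 2|\hat V_1(z)|^2+2H\|\hat V_1'\|_{L^2}^2 .
\]
Averaging over $z\in[0,H]$ then gives
\[
|\hat V_1(0)|^2\le \tfrac{2}{H}\|\hat V_1\|_{L^2}^2+2H\|\hat V_1'\|_{L^2}^2\le \max\{\tfrac2H,2H\}\,\|\hat V_1\|_{H^1}^2 .
\]
Inserting (ii) and taking square roots gives (iii).

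The main obstacle is Step 1/Step 2's reliance on FD convergence, including the derivative error at the boundary node $z=H$ and an interpolation of $V_{1,h}$ into $H^1$. This requires $V_1\in C^3$ or $C^4$ for the $O(h^2)$ truncation, but with only $\mu_1\in C^1$ and $\rho_1\in C$ the solution is merely $C^2$. The fallback I would try first is to accept the weaker consistency order: with $V_1\in C^2$ the scheme still converges, at least $o(1)$, and the interpolation errors are $O(h)$ for $C^2$ functions. I would then state the $O(\cdot)$ constants as depending on $\|V_1\|_{C^2}$ and the stability bound $\|M(k)^{-1}\|$, which is finite for $k\notin\mathscr{A}$ and $h$ small, since the discrete problem inherits the solvability of the continuous one.
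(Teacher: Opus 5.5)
Your proposal is correct and follows the paper's proof essentially step for step: the triangle inequality at $z=H$ through $D_hV_{1,h}^{\gamma,k}(H)$ for (i), the $(a+b)^2\le 2a^2+2b^2$ splitting against the finite-difference solution for (ii), and the fundamental-theorem-of-calculus/Cauchy--Schwarz/averaging trace bound for (iii), all resting on the same assumed $O(h^2)$ finite-difference convergence. Your bookkeeping is slightly cleaner than the paper's. You keep the truncation error of $D_h$ explicitly in (i), and you square it to $O(h^{2\alpha})$ in (ii), whereas the paper's intermediate step writes $O(h^{\max(\alpha,1)})$ where $\min$ is what the statement needs. Your caveat is also fair: with only $\mu_1\in C^1$ and $\rho_1\in C$, the stated rates need extra smoothness that the paper also tacitly assumes, and an $o(1)$ fallback would not recover them.
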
\vspace{-24pt}\begin{proof}
: We first prove \textnormal{(i)}, 
\begin{align*}
    \bigg|\frac{d\hat{V}_1^{\gamma,k}(H)}{dz} \bigg|&=\bigg|\frac{dV_1^{\gamma,k}(H)}{dz} -\frac{dV_{1,\theta }^{\gamma,k}(H)}{dz}\bigg|
    \\&=\bigg|\frac{dV_1^{\gamma,k}(H)}{dz}-D_h(V_{1,h}^{\gamma,k}(H))+D_h(V_{1,h}^{\gamma,k}(H)) -\frac{dV_{1,\theta }^{\gamma,k}(H)}{dz}\bigg|
    \\&\leq O(h)+|\mathscr{R}_{int}|.
\end{align*}
where the last inequality follows from the triangle inequality and the fact that $\bigg|V_1^{\gamma,k}-V_{1,h}^{\gamma,k}\bigg|\sim O(h^2)$.
We next prove (ii), using Young's inequality, we get:
\begin{align}\nonumber\|\hat{V}_1^{\gamma,k}\|_{H^1([0,H])}^2&=\int_0^H(\hat{V}_1^{\gamma,k})^2+\bigg(\frac{d\hat{V}_1^{\gamma,k}}{dz}\bigg)^2~dz
\nonumber\\&= \int_0^H({V}_1^{\gamma,k}-{V}_{1,h}^{\gamma,k}+{V}_{1,h}^{\gamma,k}-{V}_{1,\theta}^{\gamma,k})^2+\bigg(\frac{d{V}_1^{\gamma,k}}{dz}-D_h(V_{1,h}^{\gamma,k}) +D_h(V_{1,h}^{\gamma,k})-\frac{d{V}_{1,\theta}^{\gamma,k}}{dz} \bigg)^2~dz
\nonumber\\&\leq 2 \int_0^H({V}_1^{\gamma,k}-{V}_{1,h}^{\gamma,k})^2~dz+2\int_0^H({V}_{1,h}^{\gamma,k}-{V}_{1,\theta}^{\gamma,k})^2~dz +2\int_0^H \bigg(\frac{d{V}_1^{\gamma,k}}{dz}-D_h(V_{1,h}^{\gamma,k})\bigg)^2dz \nonumber\\&\quad+2\int_0^H\bigg(D_h(V_{1,h}^{\gamma,k})-\frac{d{V}_{1,\theta}^{\gamma,k}}{dz} \bigg)^2dz
\nonumber\\&\leq  O(h^2) + 2\int_0^H({V}_{1,h}^{\gamma,k}-{V}_{1,\theta}^{\gamma,k})^2~dz  +2\int_0^H \bigg(\frac{d{V}_1^{\gamma,k}}{dz}-D_h(V_{1,h}^{\gamma,k})\bigg)^2dz
\nonumber\\&\quad+2\int_0^H\bigg(D_h(V_{1,h}^{\gamma,k})-\frac{d{V}_{1,\theta}^{\gamma,k}}{dz} \bigg)^2dz\label{eqa}\end{align}
Now, considering $h=\frac{H}{N}<1$, applying Young's inequality, we get:
\begin{align}
    \nonumber\int_0^H \bigg(\frac{dV_1^{\gamma,k}}{dz}-D_hV_{1,h}^{\gamma,k}\bigg)^2~dz&=\int_0^H \bigg(\frac{dV_1^{\gamma,k}}{dz}-D_hV_1^{\gamma,k}+D_hV_1^{\gamma,k}-D_hV_{1,h}^{\gamma,k}\bigg)^2~dz
    \nonumber\\&\leq 2\int_0^H \bigg(\frac{dV_1^{\gamma,k}}{dz}-D_hV_1^{\gamma,k}\bigg)^2~dz+2\int_0^H \bigg(D_hV_1^{\gamma,k}-D_hV_{1,h}^{\gamma,k}\bigg)^2~dz
    \nonumber\\&\leq 2O(h^\alpha)+2O(h)
    \nonumber\\&\leq O(h^{\max(\alpha,1)})
\label{eqb}\end{align}

Using \eqref{eqa} in \eqref{eqb}, we get the required inequality.

Finally, we prove (iii), using Fundamental Theorem of Calculus:
\[\hat{V}_1^{\lambda,k}(0)=\hat{V}_1^{\lambda,k}(z)-\int_0^z\frac{d\hat{V}_1^{\lambda,k}}{ds} ds.\]
Using Cauchy-Squartz inequality we further have the following:
\begin{align}
    \nonumber|\hat{V}_1^{\gamma,k}(0)|&\leq |\hat{V}_1^{\gamma,k}(z)|+\sqrt{z}\bigg\|\frac{d\hat{V}_1^{\gamma,k}}{dz}\bigg\|_{L^2([0,z])}
    \nonumber\\&\leq|\hat{V}_1^{\gamma,k}(z)|+\sqrt{H}\bigg\|\frac{d\hat{V}_1^{\gamma,k}}{dz}\bigg\|_{L^2([0,H])}
\end{align}

Further using Young's inequality and integrating from $0\text{ to }H$ we get:
\begin{align}
    \nonumber\int_0^H |\hat{V}_1^{\gamma,k}(0)|^2 dz \leq 2 \int_0^H|\hat{V}_1^{\gamma,k}(z)|^2dz+2H^2\bigg\|\frac{d\hat{V}_{1}^{\gamma,k}}{dz}\bigg\|^2_{L^2([0,H])}
\end{align}
\begin{align}
    \implies\nonumber |\hat{V}_1^{\gamma,k}(0)|^2 dz &\leq \frac{2}{H} \|\hat{V}_1^{\gamma,k}\|_{L^2([0,H])}^2+2H\bigg\|\frac{d\hat{V}_{1}^{\gamma,k}}{dz}\bigg\|^2_{L^2([0,H])}
    \\&\leq \max \bigg\{\frac{2}{H},2H \bigg\} \|\hat{V}_1^{\gamma,k}\|_{H^1([0,H])}^2
\end{align}
Using (ii) we get the desired inequality.
\end{proof}
Now we provide an estimate of the generalization error based on the per epoch information obtained from Steps 13 and 14 of Algorithm \ref{alg1}.
\begin{theorem}
For $k\notin\mathscr{A}$, suppose that
\[
I(\omega,k)
:=
\inf_{z\in[0,H]}
\left\{
\frac{\rho_1(z)\omega^2}{\mu_1(z)}
-k^2
-\frac{d}{dz}
\left(
\frac{\mu_1'(z)}{\mu_1(z)}
\right)
-\frac{1}{2}
\right\}
>0.
\]
Then the generalization error satisfies
\begin{align}
\begin{aligned}
\left(\mathscr{E}^{\gamma,k}\right)^2
\leq\;&
\frac{1}{I(\omega,k)}
\Bigg\{
|T_{b1}^{1/2}|\bigg(\max\left\{\frac{2}{H},2H\right\}O\left(
\left(\frac{H}{N}\right)^{\min(\alpha,1)}
\right)
+2T_{FD}
+2C_{q_2}N^{-\psi}\bigg)^\frac{1}{2}
\\&+
|T_{b2}^{1/2}|
\left(
O\left(\frac{H}{N}\right)
+
|T_{\mathrm{int}}^{1/2}|
\right)
+
O\left(
\left(\frac{H}{N}\right)^{\min(\alpha,1)}
\right)
+2T_{FD}
+2C_{q_2}N^{-\psi}
\\&
+
\left|
\frac{u_1'(0)}{u_1(0)}
\right|\bigg(\max\left\{\frac{2}{H},2H\right\}
O\left(
\left(\frac{H}{N}\right)^{\min(\alpha,1)}
\right)+2T_{FD}
+2C_{q_2}N^{-\psi}\bigg)
\\&+
\left|
\frac{\mu_1'(H)}{\mu_1(H)}
\right|
T_{b2}
+
\frac{1}{2}
\left(
T_{\mathrm{ode}}
+
C_{q_1}N^{-\psi}
\right)
\Bigg\}.
\end{aligned}
\label{eq:generalization_error_bound}
\end{align}
\end{theorem}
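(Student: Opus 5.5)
The plan is a standard energy (coercivity) argument for the error equation \eqref{ode3}--\eqref{ode5}, with every boundary term and every derivative term controlled by Lemma~\ref{lemma5}. Throughout, write $f=\mu_1'/\mu_1$ and $g=\rho_1\omega^2/\mu_1-k^2$.

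First multiply \eqref{ode3} by $\hat V_1^{\gamma,k}$ and integrate over $[0,H]$. Integrating $\hat V''\hat V$ by parts gives the boundary term $[\hat V\hat V']_0^H$ and the volume term $-\int_0^H(\hat V')^2\,dz$. For the first-order term, use $f\hat V\hat V'=\tfrac12 f(\hat V^2)'$ and integrate by parts once more. This gives $\tfrac12[f\hat V^2]_0^H-\tfrac12\int_0^H f'\hat V^2\,dz$. Collecting terms yields the identity
\begin{equation*}
\int_0^H\Big(g-\tfrac12 f'\Big)\hat V^2\,dz
=\int_0^H\mathcal R_{ode}\hat V\,dz+\int_0^H(\hat V')^2\,dz-\big[\hat V\hat V'\big]_0^H-\tfrac12\big[f\hat V^2\big]_0^H .
\end{equation*}
Young's inequality gives $\int\mathcal R_{ode}\hat V\le\tfrac12\|\mathcal R_{ode}\|^2_{L^2}+\tfrac12\|\hat V\|^2_{L^2}$. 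The second piece is moved to the left, which is where the $-\tfrac12$ in $I(\omega,k)$ comes from. The first piece, $\tfrac12\|\mathcal R_{ode}\|^2_{L^2}$, is replaced by $\tfrac12(T_{ode}+C_{q_1}N^{-\psi})$ using \eqref{quad1}.

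Next, the left-hand side has to be bounded below by $I(\omega,k)\,(\mathscr E^{\gamma,k})^2$. This is the step I expect to be the main obstacle. My identity produces the weight $g-\tfrac12 f'-\tfrac12$, whereas $I$ is defined with $-f'$ in place of $-\tfrac12 f'$. When $f'\ge0$ the hypothesis with $-f'$ is stronger, so the lower bound follows. In general I would first try to recover exactly the stated coefficient by integrating the $f\hat V\hat V'$ term differently, or by absorbing part of it into the $\int(\hat V')^2$ term. If neither works, I would prove the estimate with $\tfrac12 f'$ inside the infimum instead, which is at least as good when $f'\ge0$.

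The remaining terms are all handled by Lemma~\ref{lemma5}:
\begin{itemize}
\item At $z=H$, use $\hat V(H)=\mathcal R_{b2}$ and Lemma~\ref{lemma5}(i). This gives $|\hat V(H)\hat V'(H)|\le |T_{b2}^{1/2}|\,(O(H/N)+|T_{int}^{1/2}|)$.
\item Still at $z=H$, the second bracket contributes $\tfrac12|f(H)|\,\hat V(H)^2$, which is bounded by $|\mu_1'(H)/\mu_1(H)|\,T_{b2}$.
\item At $z=0$, use $\hat V'(0)=\mathcal R_{b1}$ and bound $|\hat V(0)|$ by Lemma~\ref{lemma5}(iii). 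This gives the $|T_{b1}^{1/2}|(\cdots)^{1/2}$ term.
\item The term $\tfrac12|f(0)|\,\hat V(0)^2$ is bounded by the square of Lemma~\ref{lemma5}(iii). This is the term with coefficient $|\mu_1'(0)/\mu_1(0)|$, written $u_1'(0)/u_1(0)$ in the statement.
\item The term $\int(\hat V')^2\le\|\hat V\|_{H^1}^2$ is bounded by Lemma~\ref{lemma5}(ii). This gives the standalone $O((H/N)^{\min(\alpha,1)})+2T_{FD}$ term.
\end{itemize}
In every place where $\int_0^H\mathscr R_{FD}^2\,dz$ appears, I would replace it by $T_{FD}+C_{q_2}N^{-\psi}$ using \eqref{quad2}.

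Finally, summing all contributions and dividing by $I(\omega,k)>0$ gives \eqref{eq:generalization_error_bound}, up to the treatment of the $f'$ coefficient flagged above.
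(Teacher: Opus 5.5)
Your argument is the paper's: multiply \eqref{ode3} by $\hat V$, integrate by parts, apply Young's inequality to $\int_0^H\mathcal R_{ode}\hat V\,dz$, and control the boundary and gradient terms with Lemma~\ref{lemma5} and \eqref{quad1}--\eqref{quad2}. The obstacle you flag is genuine, and the paper does not overcome it. Its first identity contains exactly your term $-\tfrac12\int_0^H f'\hat V^2\,dz$, yet when the terms are collected the weight has become $g-f'-\tfrac12$ with no justification.

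Do not look for another integration by parts. The identity $\int_0^H f\hat V\hat V'\,dz=\tfrac12[f\hat V^2]_0^H-\tfrac12\int_0^H f'\hat V^2\,dz$ is exact. Integrating by parts only once gives $[f\hat V^2]_0^H-\int_0^H f'\hat V^2\,dz-\int_0^H f\hat V\hat V'\,dz$, which is the paper's bracket and weight plus a residual that must still be estimated. Nor can $\int_0^H(\hat V')^2\,dz$ absorb anything: it sits on the upper-bound side and is controlled only through Lemma~\ref{lemma5}(ii).

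So this argument, yours or the paper's, proves the theorem as stated only when $f'\ge0$ on $[0,H]$. For the exponential profiles of Section~\ref{sec4}, $f\equiv a$ and the two infima coincide. In general you have two options. Either prove your fallback with $\inf_z\{g-\tfrac12 f'-\tfrac12\}$, which is the correct form of the theorem. Or keep the stated $I(\omega,k)$ and add $\tfrac12\|(f')^-\|_{L^\infty}\bigl(O((H/N)^{\min(\alpha,1)})+2T_{FD}+2C_{q_2}N^{-\psi}\bigr)$ to the bracket, where $(f')^-=\max\{-f',0\}$. This uses $-\tfrac12\int_0^H f'\hat V^2\,dz\le\tfrac12\|(f')^-\|_{L^\infty}\|\hat V\|_{H^1}^2$ together with Lemma~\ref{lemma5}(ii).

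There is also a smaller mismatch. Lemma~\ref{lemma5}(iii) combined with \eqref{quad2} gives
$|\hat V(0)|^2\le\max\{2/H,2H\}\bigl(O((H/N)^{\min(\alpha,1)})+2T_{FD}+2C_{q_2}N^{-\psi}\bigr)$,
with the factor multiplying the whole bracket. The displayed estimate attaches $\max\{2/H,2H\}\ge2$ only to the $O$-term, both in the $T_{b1}$ term and in the $\mu_1'(0)/\mu_1(0)$ term. So your bullet points do not literally produce those two terms. What the argument actually yields has an extra factor in front of $2T_{FD}+2C_{q_2}N^{-\psi}$. In the second term this factor is only $\max\{1/H,H\}\ge1$, thanks to your correct factor $\tfrac12$ on $[f\hat V^2]_0^H$, which the paper omits. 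This is a misprint in the statement rather than a flaw in your method, but you should record the bound in the corrected form rather than claim the displayed one.
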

\begin{proof}
: Multiplying \eqref{ode3} by $\hat{V}_1^{\gamma,k}$ and integrating from $0$ to $H$, we get:
\begin{align*}&\bigg(\frac{d\hat{V}_1^{\gamma,k}}{dz}\bigg|_{z=H}\bigg)\hat{V}_1^{\gamma,k}(H)-\bigg(\frac{d\hat{V}_1^{\gamma,k}}{dz}\bigg|_{z=0}\bigg)\hat{V}_1^{\gamma,k}(0)-\int_0^H\bigg(\frac{d\hat{V}_1^{\gamma,k}}{dz}\bigg)^2dz+\bigg[\frac{\mu_1'(H)}{\mu_1(H)}(\hat{V}_1^{\gamma,k}(H))^2-\frac{\mu_1'(0)}{\mu_1(0)}(\hat{V}_1^{\gamma,k}(0))^2\bigg]\\&-\frac{1}{2}\int_0^H\frac{d}{dz}\bigg(\frac{\mu_1'(z)}{\mu_1(z)}\bigg)\bigg(\hat{V}_1^{\gamma,k}\bigg)^2 dz+\int_0^H\bigg(\frac{\rho_1(z)\omega^2}{\mu_1(x)}-k^2\bigg) \bigg(\hat{V}_1^{\gamma,k}\bigg)^2 dz=\int_0^H\mathscr{R}_{ode}\hat{V}_1^{\gamma,k}dz\end{align*}
Applying Young's inequality and using \ref{ode3} and \ref{ode4}, we get 
\begin{align*}&\mathscr{R}_{b2}\bigg(\frac{d\hat{V}_1^{\gamma,k}}{dz}\bigg|_{z=H}\bigg)-\mathscr{R}_{b1}\hat{V}_1^{\gamma,k}(0)-\int_0^H\bigg(\frac{d\hat{V}_1^{\gamma,k}}{dz}\bigg)^2dz+\bigg[\frac{\mu_1'(H)}{\mu_1(H)}(\hat{V}_1^{\gamma,k}(H))^2-\frac{\mu_1'(0)}{\mu_1(0)}(\hat{V}_1^{\gamma,k}(0))^2\bigg]\\&-\frac{1}{2}\int_0^H\frac{d}{dz}\bigg(\frac{\mu_1'(z)}{\mu_1(z)}\bigg)\bigg(\hat{V}_1^{\gamma,k}\bigg)^2 dz+\int_0^H\bigg(\frac{\rho_1(z)\omega^2}{\mu_1(x)}-k^2\bigg) \bigg(\hat{V}_1^{\gamma,k}\bigg)^2 dz\leq\frac{1}{2}\int_0^H\mathscr{R}_{ode}^2dz+\frac{1}{2}\int_0^H\bigg(\hat{V}_1^{\gamma,k}\bigg)^2dz\end{align*}
\begin{align*}\implies \int_0^H\bigg[\bigg(\frac{\rho_1(z)\omega^2}{\mu_1(z)}-k^2\bigg)-\frac{d}{dz}\bigg[\frac{\mu_1'(z)}{\mu_1(z)}\bigg]-\frac{1}{2}\bigg]\bigg(\hat{V}_1^{\gamma,k}\bigg)^2 dz \leq & \mathscr{R}_{b1}\hat{V}_1^{\gamma,k}(0)-\mathscr{R}_{b2}\bigg(\frac{d\hat{V}_1^{\gamma,k}}{dz}\bigg|_{z=H}\bigg)\\&+\int_0^H \bigg(\hat{V}_1^{\gamma,k}\bigg)^2 dz+\frac{1}{2}\int_0^H(\mathscr{R}_{ode})^2 dz.\\&+ \bigg[\frac{\mu_1'(0)}{\mu_1(0)}(\hat{V}_1^{\gamma,k}(0))^2-\frac{\mu_1'(H)}{\mu_1(H)}(\mathscr{R}_{b2})^2\bigg]\end{align*}
\begin{align*}\implies \int_0^H\bigg(\hat{V}_1^{\gamma,k}\bigg)^2 dz \leq & \frac{1}{I(\omega,k)}\bigg\{\mathscr{R}_{b1}\hat{V}_1^{\gamma,k}(0)-\mathscr{R}_{b2}\bigg(\frac{d\hat{V}_1^{\gamma,k}}{dz}\bigg|_{z=H}\bigg)+\int_0^H \bigg(\hat{V}_1^{\gamma,k}\bigg)^2 dz\\&+ \bigg[\frac{\mu_1'(0)}{\mu_1(0)}(\hat{V}_1^{\gamma,k}(0))^2-\frac{\mu_1'(H)}{\mu_1(H)}(\mathscr{R}_{b2})^2\bigg]+\frac{1}{2}\int_0^H(\mathscr{R}_{ode})^2 dz\bigg\}.\end{align*}
Using the estimates in Lemma~\ref{lemma5}, \ref{quad1} and \ref{quad2} we get the desired estimate.
\end{proof}
\subsection {Computation of Dispersion Relation} \label{section_bisection}
Since the activation functions $\sigma$ are continuous, the trained neural network approximation $V_{1,\theta}^{\gamma,k}$ to the solution of our ODE is well-defined and continuous for all
\[
k\in
\bigg(
\omega\sqrt{\frac{\rho_2}{\mu_2}},
{\omega}\bigg/
\sqrt{\displaystyle\inf_{z\in[0,H]}
\frac{\mu_1(z)}{\rho_1(z)}}
\bigg].
\]
\begin{remark}
    It is worth noting that there is no need to exclude the exceptional set $\mathscr{A}$\eqref{disp_main}, since the neural network approximation $V_{1,\theta}^{\gamma,k}$ is well-defined and continuous for all $k$ in the prescribed interval.
\end{remark}
For a fixed $\omega$, the function
\begin{equation}
d_{\omega}(k)=\mu_1(H)\frac{dV_{1,\theta}^{\gamma,k}}{dz}(H)
+\mu_2\sqrt{k^2-\frac{\rho_2\omega^2}{\mu_2}}
\label{nndiap}\end{equation}
is continuous in the above domain. Therefore, we apply the bisection method to find its roots, thereby obtaining an approximate solution of the dispersion relation \eqref{disp_main}. 
\section{Numerical Experiments} \label{sec4}
In this section we present two applications of the proposed method by plotting their dispersion curves after obtaining the neural network trained solution $V_{1,\theta}^{\gamma,k}$ and using \eqref{nndiap}. Separate neural networks were trained for $\omega\in\{2.5,5,7.5,...,22.5\}\text{ rad/s}$ with step size 2.5 rad/s. The network details are given in Table~\ref{tab:pinn_summary}. The first 10,000 epochs were used as a warm-up phase, during which the learning rate was held fixed and the loss term weights were kept at their initial values. Several architectural and training configurations were explored during development and the setup described in Table~\ref{tab:pinn_summary} was found to give the most stable and accurate convergence for obtaining the dispersion curves according to Section~\ref{section_bisection}. For each frequency $\omega$, the trained network was saved as a separate \texttt{.pth} checkpoint file, which was later loaded to evaluate the network solution over a range of $k$ values and identify the admissible wavenumbers satisfying the dispersion relation. All models were implemented in PyTorch, with training carried out on Kaggle, where each frequency took approximately 8-11 hours to train. A schematic representation is given in Fig~\ref{nn}.

\begin{figure}
    \centering
    \includegraphics[width=1\linewidth]{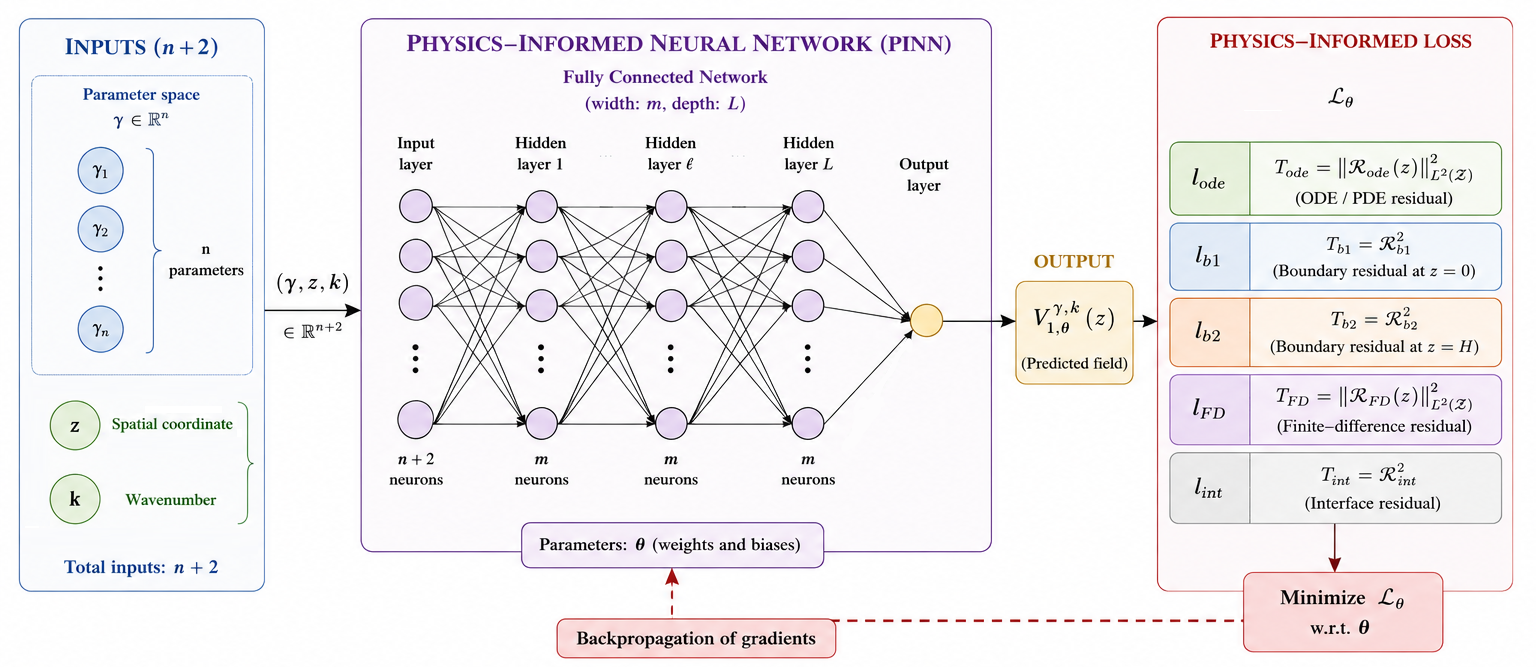}
    \caption{Schematic of the proposed physics-informed neural network.}
    \label{nn}
\end{figure}
\begin{table}[htbp]
\centering
\caption{Summary of PINN architecture and training configuration.}
\label{tab:pinn_summary}
\begin{tabular}{@{}p{4cm}p{10cm}@{}}
\toprule
\textbf{Component} & \textbf{Details} \\
\midrule
Network Architecture & Fully connected feedforward network with residual connections:input layer ($n+2 \to 100$), 4 hidden residual layers ($100 \to 100$ each) and output layer ($100 \to 1$). \\
\addlinespace
Activation Function & SiLU (Swish), chosen for smoothness and differentiability required for second-order derivatives in the $\mathscr{R}_{ode}$ residual. \\
\addlinespace
Loss Function &$\mathscr{L}_{\theta}$, where $T_{ode}$ and $T_{FD}$ is calculated by composite trapizoidal rule.\\
\addlinespace
Loss Weighting & Adaptive, inverse-magnitude rebalancing every 500 epochs. \\
\addlinespace
Training Points & 4000 equally spaced points for $z\in[0,H]$. \\
\addlinespace
Optimizer (Phase 1) & Adam, initial learning rate $5 \times 10^{-4}$, gradient clipping (max norm 1.0) \\
\addlinespace
LR Schedule & 10{,}000-epoch linear warmup, followed by cosine annealing to $1 \times 10^{-6}$. \\
\addlinespace
Epochs (Phase 1) & 100{,}000 \\
\addlinespace
Optimizer (Phase 2) & L-BFGS (strong Wolfe line search, history size 50, max 20 inner step) \\
\addlinespace
Steps (Phase 2) & 50 \\
\addlinespace
Batch Size & 10 parameter combinations per epoch \\
\bottomrule
\end{tabular}
\end{table}
\subsection{SH Waves in Layered Earth Media}
We first test the model in a seismological setting where a heterogeneous sandstone layer sits on top of an elastic granite halfspace. The shear modulus and density have exponential changes with rates $a$ and $b$ respectively. Specifically, $\mu_1(z)=\mu_0 e^{az}$ and $\rho_1(z)=\rho_0 e^{bz}$ $(\therefore n=2)$. The parameter values are shown in Table~\ref{tab:material_properties}. Since this setup does not have any analytical solution for the dispersion relation, we compare it to a known analytical case, when $a=b=0$ (classical Love Wave \eqref{Love}) and $a=b$ \eqref{case1}. For the general case, we plot results against the Heskell matrix method \cite{haskell1990dispersion}. We first break the layer into $10$ homogeneous sections, taking the average shear modulus and density for each layer. Then we do this for $50$ and $100$ layers. We compare these results with the continuous case predicted by our model. Finally, we conduct a parameter sensitivity analysis to see how changes in $a$ and $b$ affects the wave propagation. The training loss as a function of the number of epochs for $\omega=2.5$ is shown in Fig.~\ref{fig:loss}. The loss decreases significantly during training, with all error components reaching $<10^{-5}$ in the end. The other neural networks exhibited similar training behavior and comparable error levels; hence, only a representative training-loss plot is presented.
\begin{table}[htbp]
    \centering
    \caption{Material properties used in the numerical simulations for SH wave in layered earth media.}
    \label{tab:material_properties}
    \begin{tabular}{cccc}
        \hline
        $\rho_0 (\text{kg}/m^3 )$ \cite{mavko2009rock}& $\rho_1(\text{kg}/m^3 )$ \cite{yu2014thermophysical} & $\mu_0(\text{N}/m^2)$\cite{rice2021manufacture} & $\mu_1(N/m^2)$ \cite{yu2014thermophysical} \\
        \hline
        $2370$ & $2540$ & $0.139\times10^{10}$ & $1.32\times10^{10}$ \\
        \hline
    \end{tabular}
\end{table}
From Figs~\ref{fig:validation}(a) and \ref{fig:validation}(b), we can see that our PINN based solution agrees well with the analytical solution with Root Mean Square (RMS) errors of 0.1024 and 0.1312 respectively. As an additional validation, the heterogeneous layer is approximated by an increasing number of homogeneous sublayers and the corresponding dispersion curves are computed using the Haskell matrix method. With increasing number of sublayers, the Haskell matrix solution converges to the dispersion relation obtained using the proposed method as shown in Fig~\ref{fig:Haskell}. 

Further, the parameter sensitivity analysis shown in Fig.~\ref{fig:a_b_variation}(a) shows that the phase velocity increases with $a$. This is because $a$ controls the depth-dependent shear modulus, $\mu_1(z)=\mu_0e^{az}$. As $a$ increases, the layer becomes stiffer, allowing shear waves to propagate faster. In contrast, as shown in Fig.~\ref{fig:a_b_variation}(b), increasing $b$ increases the density, $\rho_1(z)=\rho_0e^{bz}$, which increases the inertia of the medium and hence reduces the wave speed. It can also be seen from both figures that the effect is more pronounced at higher frequencies. This is because $a$ and $b$ describe the properties of the finite layer rather than the half-space and higher-frequency waves are more sensitive to the shallow part of the medium. These results further confirm that the model captures the expected physical behavior.

\begin{figure}[htbp]
    \centering
    \begin{minipage}{0.45\textwidth}
        \centering
        \includegraphics[width=\textwidth]{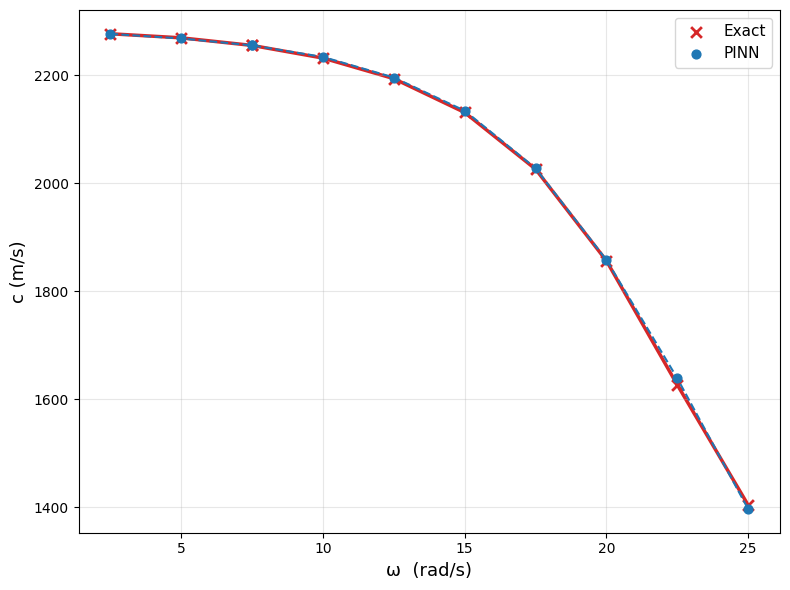}
        \smallskip
        \text{(a) Comparison with classical Love wave.}
    \end{minipage}
    \hfill
    \begin{minipage}{0.45\textwidth}
        \centering
        \includegraphics[width=\textwidth]{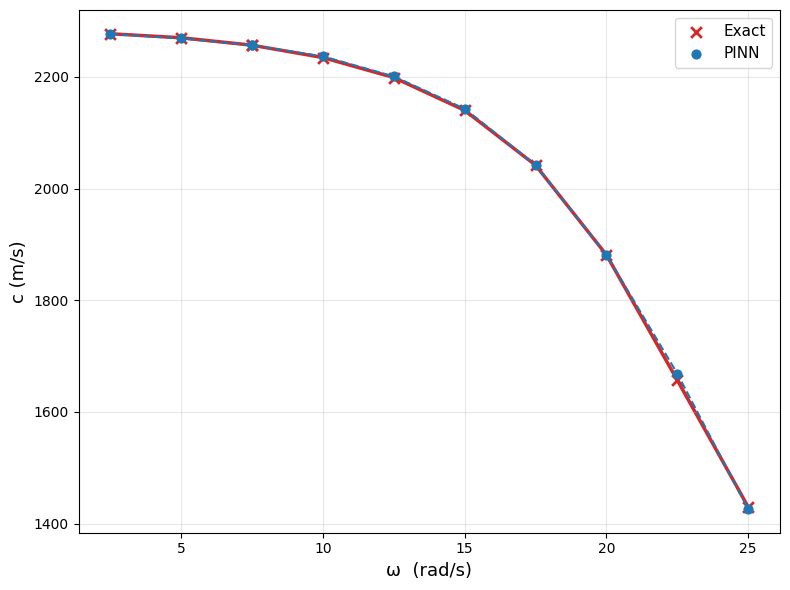}
        \smallskip
        \text{(b) Comparison with the case when $a=b=0.001$.}
    \end{minipage}

    \vspace{0.3cm}
   \caption{Validation of the PINN solution.}
    \label{fig:validation}
\end{figure}
\begin{figure}
    \centering
    \includegraphics[width=0.45\linewidth]{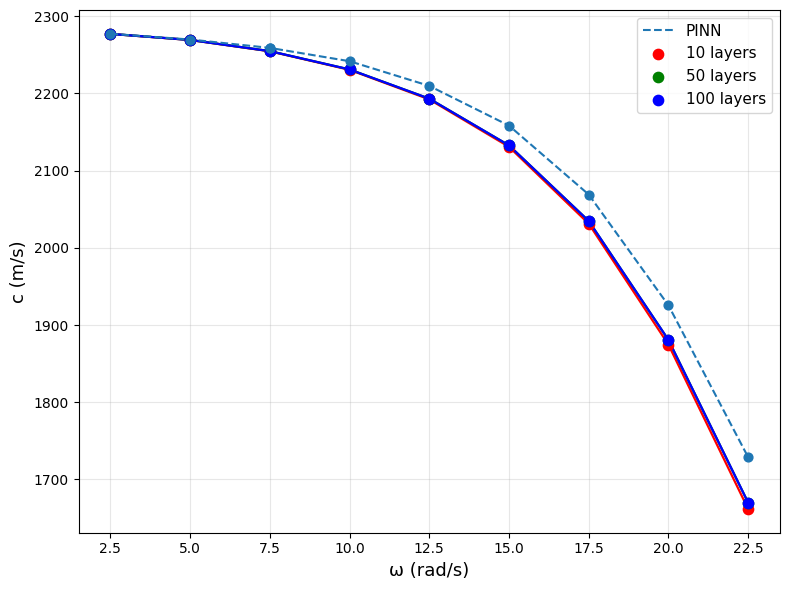}
    \caption{Comparison with the dispersion curves derived using the Haskell matrix method}
    \label{fig:Haskell}
\end{figure}
\begin{figure}[htbp]
    \centering
    \begin{minipage}{0.45\textwidth}
        \centering
        \includegraphics[width=\textwidth]{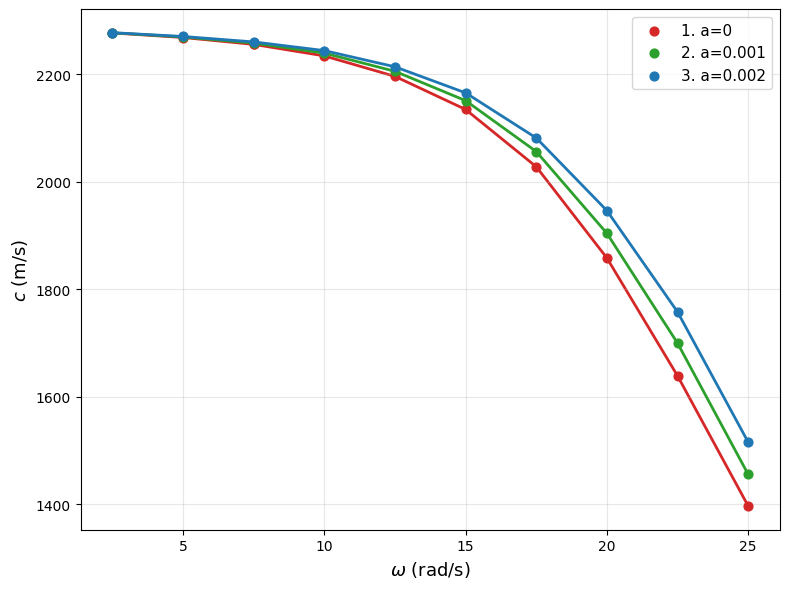}
        \smallskip
        \text{(a) Effect of change in $a$.}
    \end{minipage}
    \hfill
    \begin{minipage}{0.45\textwidth}
        \centering
        \includegraphics[width=\textwidth]{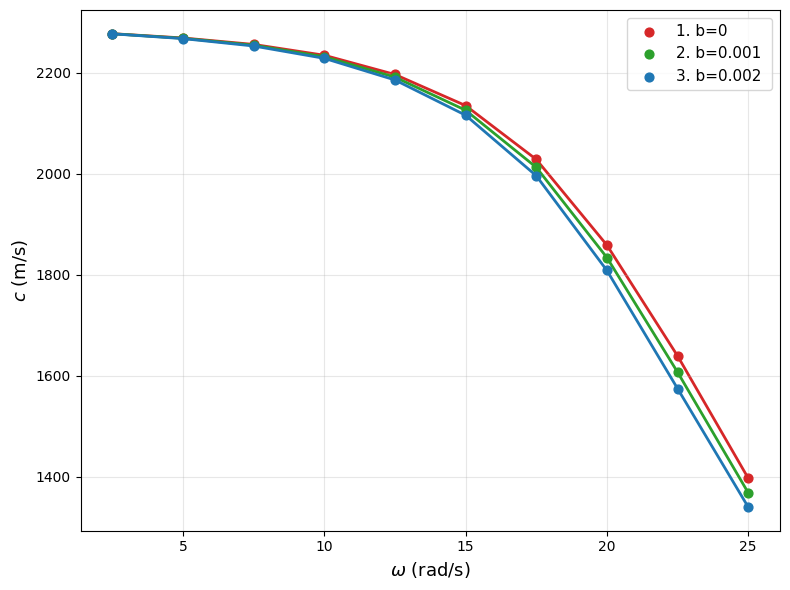}
        \smallskip
        \text{(b) Effect of change in $b$.}
    \end{minipage}
    \caption{Variation of parameter $a$ and $b$.}
    \label{fig:a_b_variation}
\end{figure}
\begin{figure}
    \centering
    \includegraphics[width=0.75\linewidth]{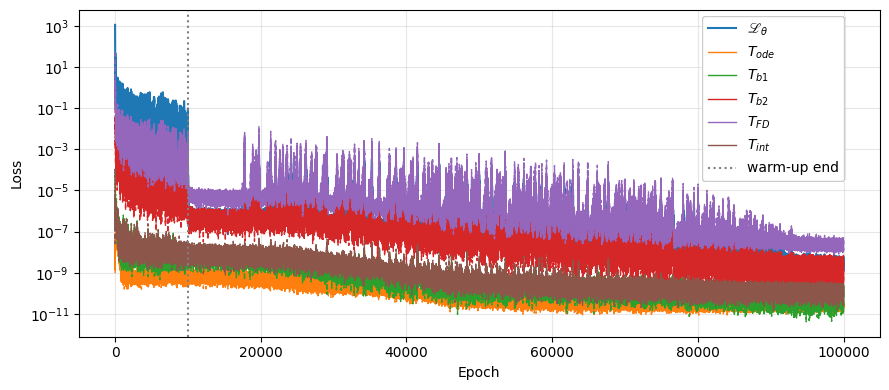}
    \caption{Number of epochs vs training losses.}
    \label{fig:loss}
\end{figure}

\subsection{SH Waves in Layered Media with Piezoelectric and Piezomagnetic Substrates}
The main aim of this section is to highlight an important advantage of this formulation. We consider a hetrogeneous elastic layer overlying either a piezomagnetic or piezoelectric substrate, which are governd by different physical laws. One thing that is common in both is the hetrogeneous layer equation and can be solved independently using the PINN structure and subsequently coupled together into the corresponding dispersion relation, as illustrated schematically in Fig.~\ref{fig:piezo_mag}.
\begin{table}[htbp]
    \centering
    \caption{Material properties used in the numerical simulations for the SH wave in piezoelectric and piezomagnetic layered configuration.}
    \label{tab:material_properties}
    \begin{tabular}{cccc}
        \hline
        \multicolumn{4}{c}{\text{Elastic layer \cite{pramanik2016propagation}}}\\
        \hline
        $\mu_0\;(\mathrm{N/m^2})$ &
        $\rho_0\;(\mathrm{kg/m^3})$ & & \\
        \hline
        $3.5\times10^9$ & $1600$ & & \\
        \hline
        \multicolumn{4}{c}{\text{Piezoelectric substrate: BaTiO$_3$ \cite{sadab2025wave}}}\\
        \hline
        $c_{44}\;(\mathrm{N/m^2})$ &
        $e_{15}\;(\mathrm{C/m^2})$ &
        $\varepsilon_{33}\;(\mathrm{C^2/(N\,m^2)})$ &
        $\rho_2^e\;(\mathrm{kg/m^3})$\\
        \hline
        $43\times10^9$ &
        $11.6$ &
        $11.2\times10^{-9}$ &
        $5800$\\
        \hline
        \multicolumn{4}{c}{\text{Piezomagnetic substrate: CoFe$_2$O$_4$ \cite{pang2008propagation}}}\\
        \hline
        $c_{44}\;(\mathrm{N/m^2})$ &
        $h_{15}\;(\mathrm{N/(A\,m)})$ &
        $\mu_{11}\;(\mathrm{N\,s^2/C^2})$ &
        $\rho_2^m\;(\mathrm{kg/m^3})$\\
        \hline
        $45.3\times10^9$ &
        $550$ &
        $157\times10^{-6}$ &
        $5300$\\
        \hline
    \end{tabular}
\end{table}

We consider the exponentially heterogeneous elastic layer of varying rates of hetrogeniety of shear modulus and density, ($\mu_1(z)=\mu_0e^{az}$ and $\rho_1(z)=\rho_0e^{bz}$), overlying either a piezoelectric or a piezomagnetic substrate. For the governing equations and underlying physical formulation, we refer the reader to (\cite{sadab2025dispersive,singh2026nonlocal}) and the references therein. We now present the dispersion relations electrically short (piezoelectric) and magnetically short (piezomagnetic) cases using our formulation. The approximate dispersion relation for the SH wave propagation on hetrogeneous layer overlying a piezoelectric substrate which is electrically shorted is thus given as follows:
\begin{equation}
\mu_1(H)\frac{\bm{d}\bm{V}_{\bm{1,\theta}}^{\bm{\gamma,k}}\bm{(H)}}{\bm{dz}}
+c_{44}\alpha_e
+\frac{e_{15}\beta_e(\alpha_e-k)}
{\alpha_e^2-k^2}
=0
\label{pes}\end{equation}
where $\alpha_e^2
=
k^2-
\frac{\rho_2\omega^2}
{c_{44}+\dfrac{e_{15}^2}{\varepsilon_{33}}}
$ and $ 
\beta_e
=
\frac{e_{15}}{\varepsilon_{33}}
\label{pms}\left(\alpha_e^2-k^2\right).
$
Here, $c_{44}$ denotes the shear elastic stiffness constant of the
piezoelectric medium, $e_{15}$ is the piezoelectric
coupling coefficient, $\varepsilon_{33}$ is the dielectric permittivity in the
$z$-direction and $\rho_2^e$ is the density of the piezoelectric substrate. Similarly replacing the piezoelectric with piezomagnetic substrate we get the following approximate dispersion relation, for the magnetically short case:
\begin{equation}
\mu_1(H)\frac{\bm{d}\bm{V}_{\bm{1,\theta}}^{\bm{\gamma,k}}\bm{(H)}}{\bm{dz}}
+c_{44}\alpha_m
+\frac{h_{15}k e^{-H(k-\alpha_m)}}{\gamma_m}
+\frac{h_{15}\alpha_m\beta_m}
{\alpha_m^2-k^2}
=0
\label{pms}
\end{equation}
where
$\alpha_m^2
=
k^2-
\frac{\rho_2\omega^2}
{c_{44}+\dfrac{h_{15}^2}{\mu_{11}}},
$ $\beta_m
=
\frac{h_{15}}{\mu_{11}}
\left(\alpha_m^2-k^2\right)
$
and
$\gamma_m
=
\frac{
k(\alpha_m^2-k^2)\mu_{11}
}{
e^{H(k-\alpha_m)}
\left\{
\alpha_m
\left[
h_{15}(\alpha_m^2-k^2)
-\beta_m\mu_{11}
\right]
\right\}
}.
$
Here, $c_{44}$ denotes the shear elastic stiffness constant of the piezomagnetic medium, $h_{15}$ is the piezomagnetic coupling coefficient, $\mu_{11}$ is the magnetic permeability in the $z$-direction and $\rho_2^m$ is the density of the piezomagnetic substrate. Note that in both the cases \eqref{pes} and \eqref{pms}, the same $\bm{V}_{\bm{1},\bm{\theta}}^{\bm{\gamma},\bm{k}}$ is being used which is the PINN solution of \ref{ode1}. for this numerical experiments  the data used is given in Table~. The training loss for the neural network  $\omega=2.5$ is shown in Fig~\ref{fig:loss2}. For validation, we compared the dispersion relations derived from \eqref{pms} and \eqref{pes} with their corresponding analytical cases $(a=b=0)$, as shown in Fig.~\ref{fig:validation2}(a) and (b). The corresponding RMS errors are 0.8945 and 0.1328, respectively.   

A natural extension of the proposed formulation is to consider electrically open and magnetically open boundary conditions for the piezoelectric and piezomagnetic substrates, respectively. The dispersion relation of the heterogeneous layer on piezoelectric substrate under electrically open boundary conditions can therefore be derived as follows:
\begin{equation}
\mu_1(H)\frac{\bm{d}\bm{V}_{\bm{1,\theta}}^{\bm{\lambda,k}}\bm{(H)}}{\bm{dz}}+c_{44}\alpha_e+\frac{e_{15}k e^{-H(k-\alpha_e)}}{\gamma_e}+\frac{e_{15}\alpha_e\beta_e}{\alpha_e^2-k^2}=0
\label{peo}
\end{equation}
where
 $\gamma_e
=
\frac{
k(\alpha_e^2-k^2)\varepsilon_{33}
}{
e^{H(k-\alpha_e)}
\left\{
\alpha_e
\left[
e_{15}(\alpha_e^2-k^2)
-\beta_e\varepsilon_{33}
\right]
\right\}
}.$

Similarly, the dispersion relation for the hetrogeneous layer over piezomagnetic substrate under magnetically open boundary conditions is given as follows:
\begin{equation}
\mu_1(H)\frac{\bm{d}\bm{V}_{\bm{1,\theta}}^{\bm{\lambda,k}}\bm{(H)}}{\bm{dz}}
+c_{44}\alpha_m
+\frac{h_{15}\beta_m(\alpha_m-k)}
{\alpha_m^2-k^2}
=0.
\label{pmO}
\end{equation}
 From Figs.~\ref{pes} (a), \ref{pms} (a), \ref{peo} (a) and \ref{pmO} (a), we observe a common trend: as the heterogeneity parameter $(a)$ associated with the rate of change of the shear modulus increases, the phase velocity also increases. Similarly, from Figs.~\ref{pes} (b), \ref{pms} (b), \ref{peo} (b) and \ref{pmO} (b), we observe that the phase velocity decreases as the heterogeneity parameter $(b)$ associated with the density increases. Moreover, significant changes in the phase velocity are observed in both cases, particularly in the high-frequency range. The physical reasoning behind these observations has already been discussed in the previous section.     
\begin{figure}[htbp]
    \centering
    \includegraphics[width=0.8\linewidth]{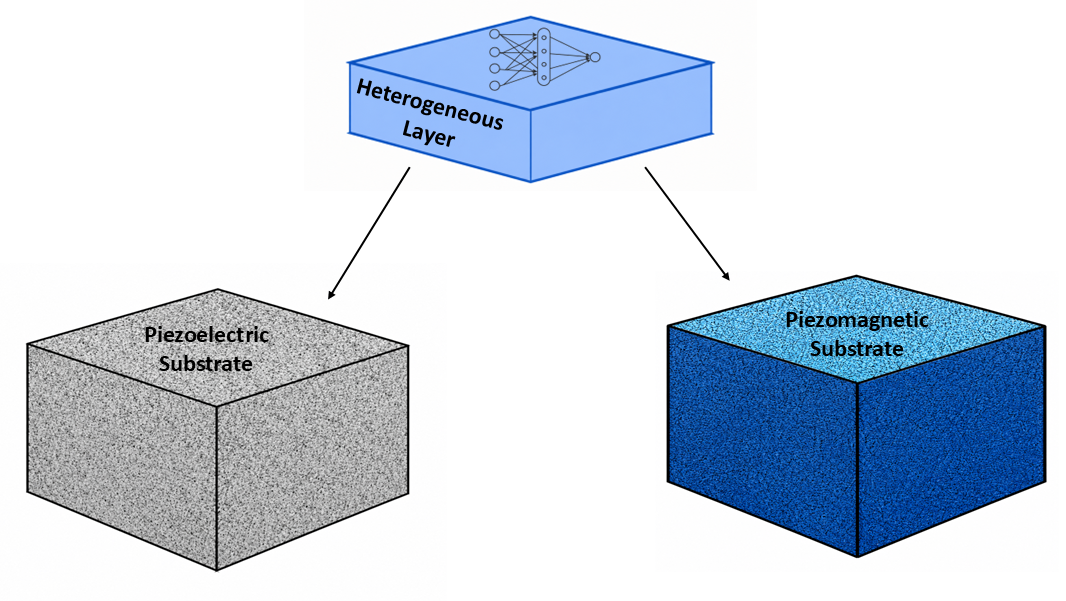}
    \caption{Schematic representation of the proposed formulation for piezoelectric and piezomagnetic substrates}
    \label{fig:piezo_mag}
\end{figure}

\begin{figure}[htbp]
    \centering
    \begin{minipage}{0.45\textwidth}
        \centering
        \includegraphics[width=\textwidth]{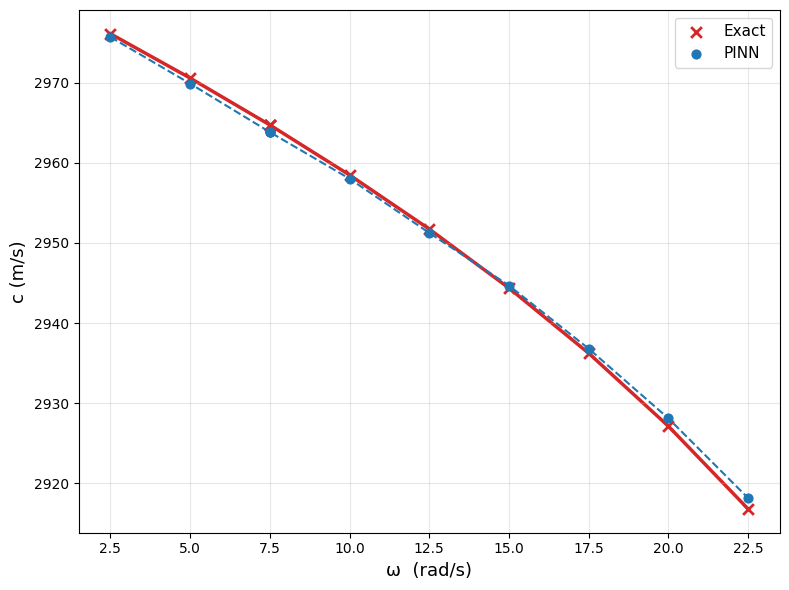}
        \smallskip
        \text{(a) Comparison with analytical }
        \text{piezoelectric case $(a=b=0)$.}
    \end{minipage}
    \hfill
    \begin{minipage}{0.45\textwidth}
        \centering
        \includegraphics[width=\textwidth]{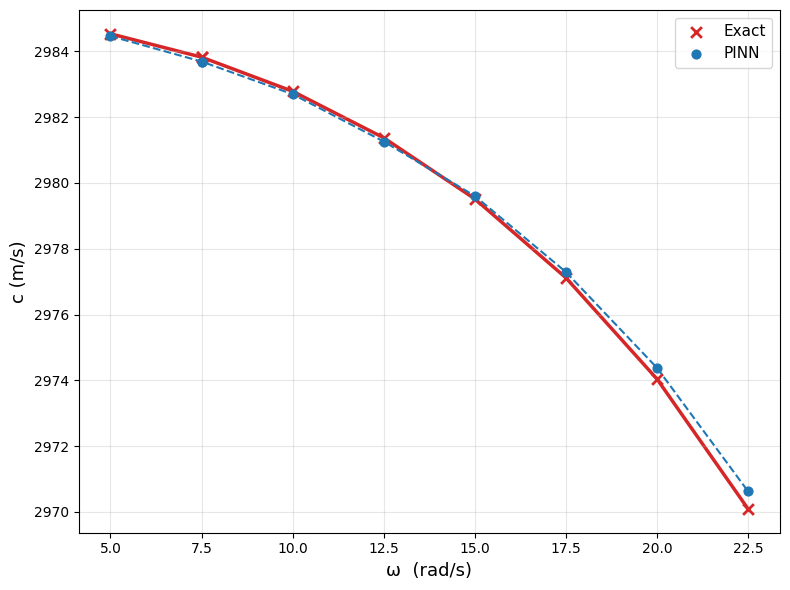}
        \smallskip
        \text{(b) Comparison with analytical} 
        \text{piezomagnetic case $(a=b=0)$.}
    \end{minipage}

    \vspace{0.3cm}
   \caption{Validation of the PINN solution.}
    \label{fig:validation2}
\end{figure}
\begin{figure}[htbp]
    \centering
    \begin{minipage}{0.45\textwidth}
        \centering
        \includegraphics[width=\textwidth]{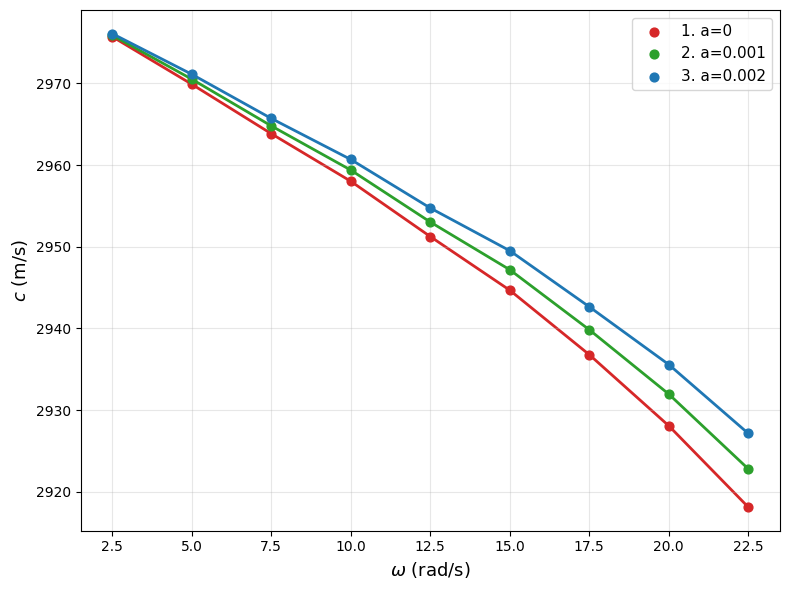}
        \smallskip
        \text{(a) Effect of change in a.}
    \end{minipage}
    \hfill
    \begin{minipage}{0.45\textwidth}
        \centering
        \includegraphics[width=\textwidth]{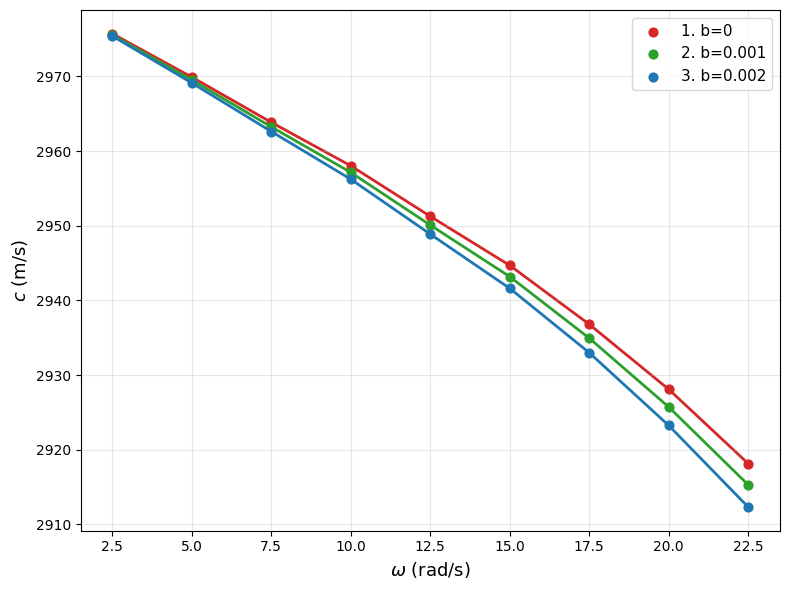}
        \smallskip
        \text{(b) Effect of change in b.}
    \end{minipage}

    \vspace{0.3cm}
   \caption{Variation of parameter a and b in piezoelectric short case.}
\end{figure}   
\begin{figure}[htbp]
    \centering
    \begin{minipage}{0.45\textwidth}
        \centering
        \includegraphics[width=\textwidth]{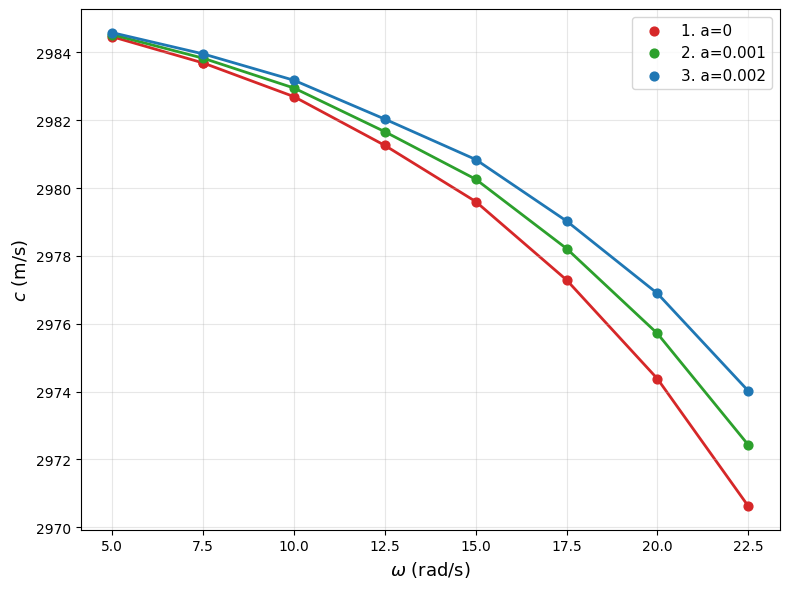}
        \smallskip
        \text{(a) Effect of change in a.}
    \end{minipage}
    \hfill
    \begin{minipage}{0.45\textwidth}
        \centering
        \includegraphics[width=\textwidth]{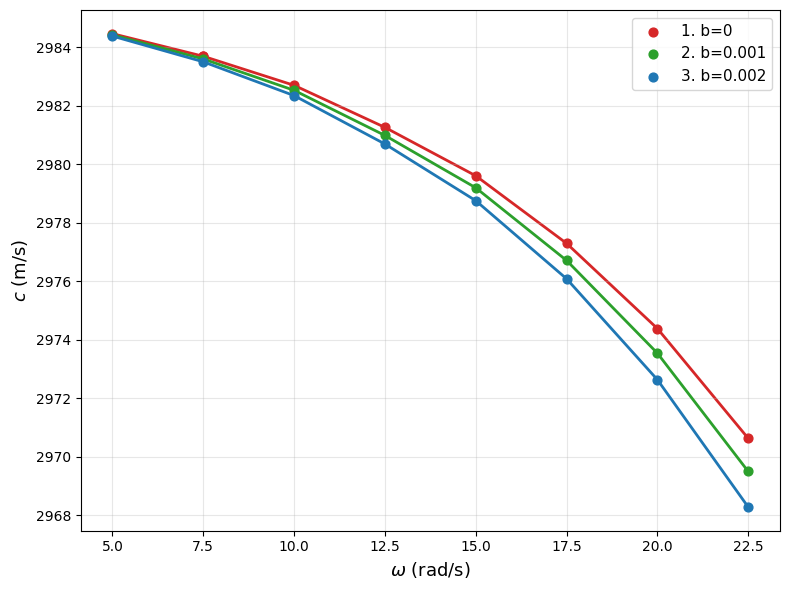}
        \smallskip
        \text{(b) Effect of change in b.}
    \end{minipage}

    \vspace{0.3cm}
   \caption{Variation of parameter a and b in piezomagnetic short case.}
\end{figure}

\begin{figure}[htbp]
    \centering
    \begin{minipage}{0.45\textwidth}
        \centering
        \includegraphics[width=\textwidth]{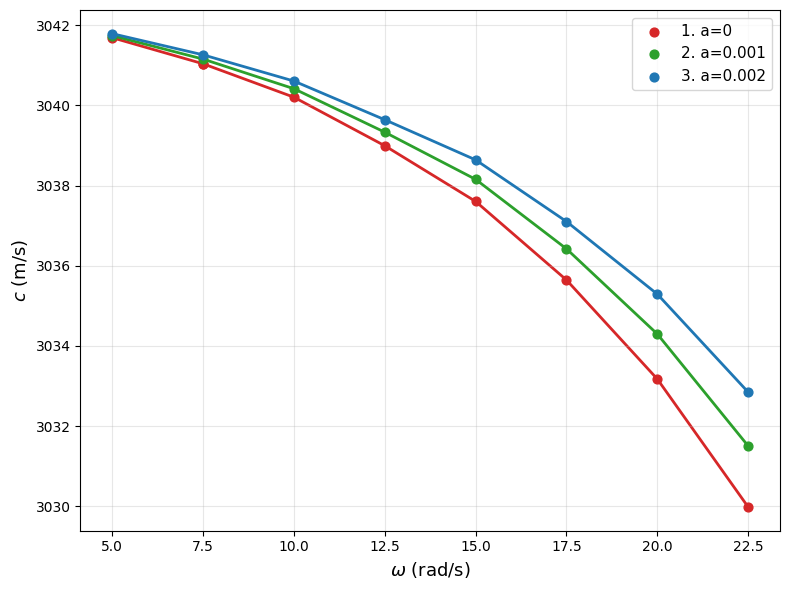}
        \smallskip
        \text{(a) Effect of change in a.}
    \end{minipage}
    \hfill
    \begin{minipage}{0.45\textwidth}
        \centering
        \includegraphics[width=\textwidth]{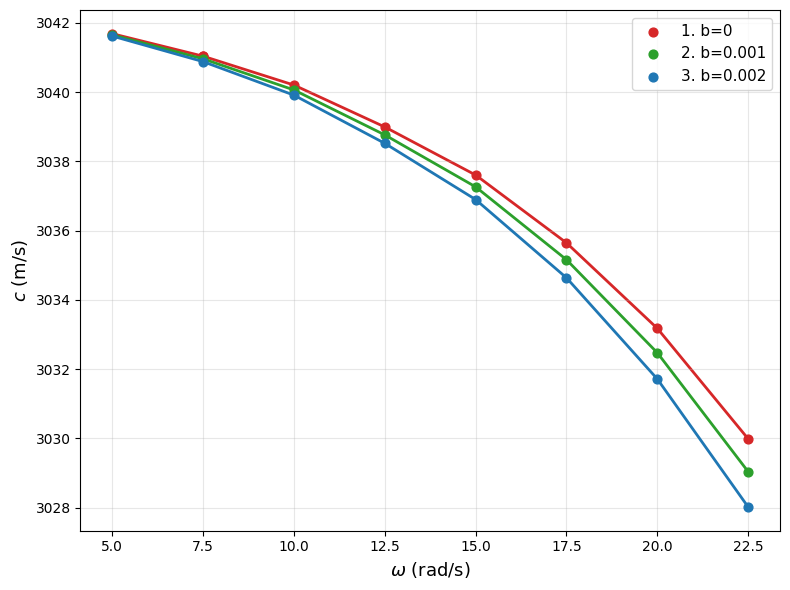}
        \smallskip
        \text{(b) Effect of change in b.}
    \end{minipage}

    \vspace{0.3cm}
   \caption{Variation of parameter a and b in piezoelectric open case.}
\end{figure}   
\begin{figure}[htbp]
    \centering
    \begin{minipage}{0.45\textwidth}
        \centering
        \includegraphics[width=\textwidth]{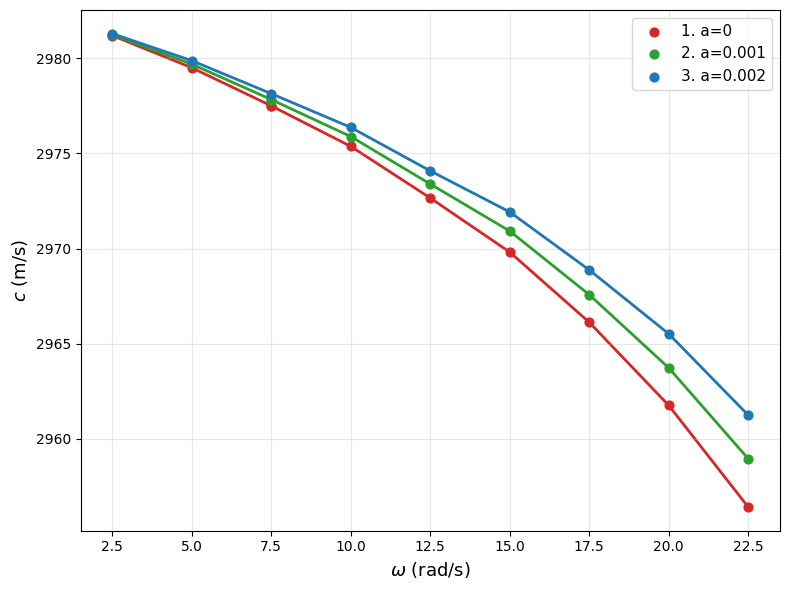}
        \smallskip
        \text{(a) Effect of change in a.}
    \end{minipage}
    \hfill
    \begin{minipage}{0.45\textwidth}
        \centering
        \includegraphics[width=\textwidth]{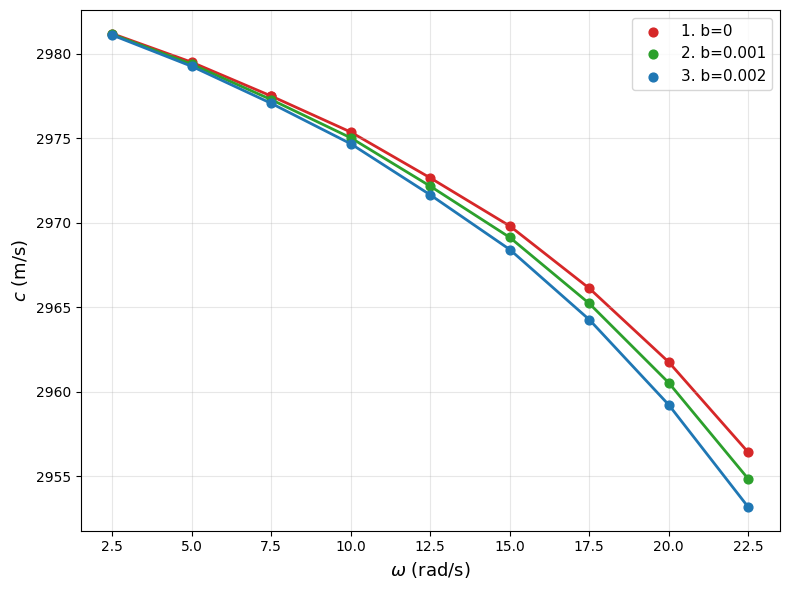}
        \smallskip
        \text{(b) Effect of change in b.}
    \end{minipage}

    \vspace{0.3cm}
   \caption{Variation of parameter a and b in piezomagnetic open case.}
\end{figure}  
\begin{figure}
    \centering
    \includegraphics[width=0.75\linewidth]{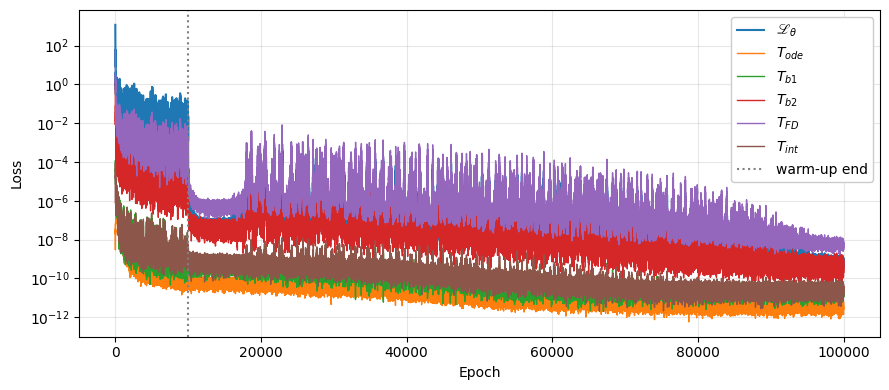}
    \caption{Number of epochs vs training losses.}
    \label{fig:loss2}
\end{figure}

\section{Conclusion} \label{sec5}

In this work, we propose a physics-informed neural network (PINN) framework for determining the dispersion relations of SH waves propagating in a continuously heterogeneous elastic layer over different underlying substrates.  The heterogeneous-layer problem was separated from the substrate contribution, allowing the same learned layer representation to be coupled with different substrate models, including elastic, piezoelectric and piezomagnetic substrates. The proposed formulation was validated against analytical solutions and the Haskell matrix method. The numerical results showed the expected influence of the heterogeneity parameters on the phase velocity.

The mathematical properties of the formulation were also investigated. We established that the singular set of the finite-difference system has measure zero, providing a basis for constructing stable numerical training data. Furthermore, a generalization-error estimate was derived for the PINN approximation, accounting for the neural-network approximation, quadrature, finite-difference discretization and interface residuals. These results provide a theoretical basis for assessing the accuracy and stability of the proposed framework.

A further advantage of the proposed formulation is its computational efficiency and reusability. While the conventional finite-difference solution requires $\mathcal{O}(N)$ operations with respect to the number of spatial grid points $N$, the trained PINN provides the layer response and its interface derivative with an $\mathcal{O}(1)$ online cost with respect to $N$. This makes the framework particularly useful for repeated dispersion calculations and for investigating the heterogeneity of a layer when its thickness is known. In particular, the trained network can serve as a reusable representation of the heterogeneous layer rather than requiring the boundary-value problem to be solved from scratch for every new substrate or parameter configuration.

A natural extension of the present framework is to compute the dispersion relation of torsional surface waves Dey et al.~\cite{dey1996torsional} with general heterogeneity. In the formulation of dispersion relation, the torsional displacement is represented as $v(r,z,t)=V(z)J_1(kr)e^{-i\omega t}$ and the resulting equation for the depth-dependent function $V(z)$ has the same mathematical structure as the heterogeneous-layer ODE considered in this work \eqref{ode1}, with $k$ representing the radial wavenumber rather than the horizontal wavenumber of the SH-wave problem. Thus, the present PINN formulation provides a natural basis for extending the approach to torsional waves and other related wave configurations. Future work will focus on developing such reusable PINN models and making the trained models and associated implementations available through a repository, enabling the learned heterogeneous-layer response to be repeatedly coupled with different substrates and wave configurations without solving the heterogeneous-layer problem from scratch.
\section*{Acknowledgments}
The authors gratefully acknowledge the Indian Institute of Technology (Indian School of Mines), Dhanbad, for providing the necessary research facilities.



\end{document}